\patchcmd{\section}{\scshape}{\bfseries\scshape}{}{}
\renewcommand{\@secnumfont}{\bfseries}
\def\eps{\varepsilon}
\newcommand{\comm}{{\rm comm}}
\newcommand{\comp}{{\rm comp}}
\newcommand{\esp}[1]{\mathbb{E}\left[#1\right]}
\newcommand{\NRM}[1]{{{\left\| #1\right\|}}} 
\newcommand{\set}[1]{{{\left\{ #1\right\}}}} 
\newcommand{\R}{\mathbb{R}}
\renewcommand{\P}{\mathbb{P}}
\newcommand{\cO}{\mathcal{O}}
\newcommand{\dd}{{\rm d}}
\newcommand{\cF}{\mathcal{F}}
\newcommand{\cP}{\mathcal{P}}
\newcommand{\N}{\mathbb{N}}
\newcommand{\E}{\mathbb{E}}
\newcommand{\F}{\mathcal{F}}
\newcommand{\one}{\mathbf{1}}
\newcommand{\cL}{\mathcal{L}}
\newcommand{\cC}{\mathcal{C}}
\newcommand{\dR}{\mathbb{R}}
\newcommand{\ping}{{\rm ping}}
\newtheorem{defn}{Definition}
\newtheorem{thm}{Theorem}
\newtheorem{prop}{Proposition}
\newtheorem{cor}{Corollary}
\newtheorem{lemma}{Lemma}
\newtheorem{hyp}{Assumption}
\g@addto@macro{\endabstract}{\@setabstract}
\newcommand{\authorfootnotes}{\renewcommand\thefootnote{\@fnsymbol\c@footnote}}%
\begin{document}
\begin{center}
	\LARGE 
	Asynchronous Speedup in Decentralized Optimization \par \bigskip
	\normalsize
	\authorfootnotes
	Mathieu Even\textsuperscript{1},
	Hadrien Hendrikx \textsuperscript{2} and
	Laurent Massoulié\textsuperscript{1,3} \par \bigskip
	
	\textsuperscript{1}Inria - Département d’informatique de l’ENS \\
	\smallskip \par
	\textsuperscript{2}EPFL \\
	\smallskip \par
	\textsuperscript{3}MSR-Inria Joint Centre \\
		\par \bigskip
\end{center}

\begin{abstract}
	In decentralized optimization, nodes of a communication network each possess a local objective function, and communicate using gossip-based methods in order to minimize the average of these per-node functions. While synchronous algorithms are heavily impacted by a few slow nodes or edges in the graph (the \emph{straggler problem}), their asynchronous counterparts are notoriously harder to parametrize. Indeed, their convergence properties for networks with heterogeneous communication and computation delays have defied analysis so far. 
	
	In this paper, we use a \emph{ continuized} framework to analyze asynchronous algorithms in networks with delays. Our approach yields a precise characterization of convergence time and of its dependency on heterogeneous delays in the network. Our continuized framework benefits from the best of both continuous and discrete worlds: the algorithms it applies to are based on event-driven updates. They are thus essentially discrete and hence readily implementable. Yet their analysis is essentially in continuous time, relying in part on the theory of delayed ODEs.
    
    Our algorithms moreover achieve an \emph{asynchronous speedup}: their rate of convergence is controlled by the eigengap of the network graph weighted by local delays, instead of the network-wide worst-case delay  as in previous analyses. Our methods thus enjoy improved robustness to stragglers.
\end{abstract}

\section{Introduction}

We study the following optimization problem: 
\begin{equation}\label{eq:main_pbm}
    \min_{x\in \R^d} \left\{f(x)=\sum_{i=1}^n f_i(x)\right\}\,,
\end{equation}
where each individual function $f_i:\R^d\to \R$ for $i\in[n]$ is held  by an agent $i$. We consider {\emph{asynchronous}} and {\emph{decentralized}} optimization methods that do not rely on a central coordinator. This is particularly relevant in large-scale systems in which centralized approaches suffer from a communication bottleneck at the central controller. Decentralized optimization is relevant to supervised learning of models in data centers, but also to more recent federated learning scenarios where data and computations are distributed among agents that do not wish to share their local data. We focus on asynchronous operations because of their scalability in the number of agents in the system, and their robustness to node failures and to \emph{stragglers}. In the case of empirical risk minimization, $f_i$ represents the empirical risk for the local dataset of node $i$, and $f$ the empirical risk over all datasets. Another important example, that plays the role of a toy problem for both decentralized and/or stochastic optimization is that of network averaging, corresponding to $f_i(x)=\NRM{x-c_i}^2$ where $c_i$ is a vector attached to node $i$. 
In this case, the solution of Problem~\eqref{eq:main_pbm} reads $\Bar{c}=\frac{1}{n}\sum_{i=1}^nc_i$.

\subsection{Decentralized and asynchronous setting}

We assume that agents are located at the nodes of a connected, undirected graph $G=(V,E)$ with node set  $V=[n]$. An agent $i\in V$ can compute first-order quantities (gradients) related to its local objective function $f_i$, and can communicate with any adjacent agent in the graph.
Our model of asynchrony derives from the popular randomized gossip model of \cite{boyd2006gossip}. In this model, nodes update their local values at random activation times using pairwise communication updates. This asynchronous model makes the idealized assumption of instantaneous communications, and hence does not faithfully represent practical implementations.
To alleviate this drawback, several works \citep{Assran_2021,sirb2018delayedgossip,wu2018delays,wang2015delays,Li2016DistributedMD,pmlr-v80-lian18a} introduce communication and computation delays in either pairwise updates, or in asymmetric gossip communications.

However,  all these works provide convergence guarantees that either require global synchronization between the nodes, or are implicitly determined by an upper bound on the worst-case delay in the whole graph. Indeed they assume that
i) for some $k_{\max}>0$, for all edges $(ij)\in E$, each communication between agents $i$ and $j$ overlaps with at most  $k_{\max}$ other communications in the whole graph, and ii) either agents $i$ or graph edges $(ij)$ are activated for agent interaction sequentially in an i.i.d. manner. 
Thus assuming distributed asynchronous operation where individual nodes schedule their interactions based only on local information, the $k_{\max}$ constraint can only be enforced by requiring individual nodes to limit their update frequency to $1/(n \tau_{\max})$.


Consequently, the resulting algorithms have temporal convergence guarantees proportional to  $\tau_{\max}$. They are thus not robust to \emph{stragglers}, i.e. slow nodes or edges in the graph that induce large $\tau_{\max}$.

To understand the scope for improvement over such methods, recall that for synchronous algorithms with updates performed every $\tau_{\max}$ seconds, for $L$-smooth and $\sigma$-strongly convex functions $f_i$, 
the time required to reach precision $\eps>0$ for $\frac{1}{n}\sum f_i$  is lower-bounded by \citep{scaman2017optimal}:
\begin{equation}
    \Omega\left(\tau_{\max} {\rm Diam}(G) \sqrt{\kappa}\ln(\eps^{-1})\right)\,,\label{eq:inf_homogeneous}
\end{equation}
where $\kappa=L/\sigma$ is the condition number of the functions $f_i$ and ${\rm Diam}(G)$ is the diameter of graph $G$. 

In this article we seek better dependency on individual delays in the network. Specifically we consider the following


\begin{hyp}[\textbf{Heterogeneous delays}] \label{hyp:delays}
There exist $\tau_{ij}$ for $(ij)\in E$ and $\tau_i^\comp$ for $ i\in V$ such that communications between two neighboring agents  $i$ and $j$ in the graph take  time at most $\tau_{ij}$, and a computation at node $i$ takes  time at most $\tau_i^\comp$. 
\end{hyp}
Under such heterogeneous delay assumptions,
\textbf{how robust to stragglers can decentralized algorithms be?}
One can adapt the proof of~\cite{scaman2017optimal} to Assumption~\ref{hyp:delays} to establish the generalized form of lower bound~\eqref{eq:inf_homogeneous}:
\begin{equation}\label{eq:lower_hetero}
    \Omega\left( D(\tau) \sqrt{\kappa}\ln(\eps^{-1})\right)\,,
\end{equation}
where $D(\tau)=\sup_{(i,j)\in V^2} {\rm dist}(i,j)$ for:
\begin{equation*}
    {\rm dist}(i,j)=      \underset{\forall 1\leq k\leq p,\,(i_k,i_{k+1})\in E}{\inf_{(i=i_0,\ldots,i_p=j),}}      \tau_i^\comp  +\tau_j^\comp  +\sum_{k=0}^{p-1}\tau_{i_ki_{k+1}}\,. 
\end{equation*}
Here ${\rm dist}(i,j)$ is the time distance between nodes $i$ and $j$, and $D(\tau)$ is the \emph{diameter} of graph $G$ for this distance. $D(\tau)$ is the generalization of $\tau_{\max}{\rm Diam}(G)$ to the heterogeneous-delay setting. This lower bound suggests that robustness to stragglers is possible: indeed if a fraction of the nodes or edges is too slow (large delay $\tau_{ij}$), this may not even impact this lower bound, since the shortest path between two nodes may always take another route. 

We aim at building \emph{decentralized} algorithms with performance guarantees that enjoy such robustness to individual delay bounds. However, since we focus on fully decentralized algorithms, our performance guarantees will not be expressed in terms of some diameter $D(\tau)$ as in \eqref{eq:lower_hetero} but instead in terms of some spectral characteristics of the graph at hand\footnote{Note that similar spectral characteristics (albeit based on a single worst-case delay parameter $\tau_{\max}$) appear in \cite{Assran_2021,sirb2018delayedgossip,wu2018delays,wang2015delays,Li2016DistributedMD,scaman2017optimal}.}.
Specifically, let us introduce the Graph Laplacian.

\begin{defn}[\textbf{Graph Laplacian}]\label{def:Laplacian}
Let $\nu=(\nu_{ij})_{(ij)\in E}$ be a set of non-negative real numbers. The \emph{Laplacian} of the graph $G$ weighted by the $\nu_{ij}$'s is the matrix $\Delta_G(\nu)$ with $(i,j)$ entry equal to $-\nu_{ij}$ if $(ij)\in E$, $\sum_{k\sim i} \nu_{ik}$ if $j=i$, and $0$ otherwise. In the sequel $\nu_{ij}$ always refers to the weights of the Laplacian, and $\lambda_2(\Delta_G(\nu))$ denotes this Laplacian's second smallest eigenvalue.
\end{defn}

We thus seek performance guarantees similar to \eqref{eq:lower_hetero}  with in place of $D(\tau)$ the term  $\lambda_2(\Delta_G(\nu))^{-1}$ for some parameters $\nu_{ij}$ that depend on delay characteristics local to edge $(ij)$. 


\subsection{Contributions} 

\textbf{\emph{(i)}} We first consider the \emph{network averaging problem}, for which we introduce \emph{Delayed Randomized Gossip} in Section~\ref{sec:gossip}. 
Building on recent works on continuized gradient descent for Nesterov acceleration~\citep{even2021continuized}, we analyze Delayed Randomized Gossip in the continuized framework, that allows a continuous-time analysis of an algorithm even though the latter is based on discrete, hence practically implementable operations. Our analysis leads to explicit stability conditions that have the appealing property of being \emph{local}, i.e. they require each agent to tune its algorithm parameters to delay bounds in its graph neighborhood. 

 They ensure a linear rate of convergence determined by $\lambda_2\big(\Delta_G(\nu)\big)$, for weights of order $\nu=1/(\sum_{(kl)\sim (ij)}\tau_{kl})$\footnote{We write $(ij)\sim(kl)$ and say that two edges $(ij)$, $(kl)$ are neighbors if they share at least one node.}. This dependency of weights in the Laplacian on local delay bounds is what we call the \emph{asynchronous speedup}, since it implies a scaling that is no longer proportional to $\tau_{\max}$.

\textbf{\emph{(iii)}} Using an augmented graph approach, we propose algorithms that generalize Delayed Randomized Gossip to solve the decentralized optimization problem in Section~\ref{sec:optim}. Under strong convexity and smoothness assumptions on the local functions $f_i$, we  obtain local stability conditions yielding an asynchronous speedup for this more general setup. 

\textbf{\emph{(iv)}} We further generalize our setup with the introduction of \emph{local capacity constraints} in Section~\ref{sec:trunc}, in order to take into account the fact that nodes or edges cannot handle an unlimited number of operations in parallel. To that end, we introduce \emph{truncated Poisson point processes} in the continuized framework for the analysis.

\textbf{\emph{(v)}} The theoretical guarantees for our algorithms in Sections ~\ref{sec:gossip}, ~\ref{sec:optim} and ~\ref{sec:trunc} are all based on general guarantees for so-called delayed coordinate gradient descent in the continuized framework, that we present and establish in Section~\ref{sec:cgd}. These results may be of independent interest beyond our current focus on decentralized optimization. 

\textbf{\emph{(v)}} Finally, we identify from our stability conditions and convergence guarantees a phenomenon reminiscent of \emph{Braess}'s paradox (Section~\ref{sec:braess}): deleting some carefully chosen edges can lead to faster convergence. This in turn  suggests rules for sparsifying communication networks in distributed optimization.

\section{Related works}


\subsection{Decentralized Optimization and Gossip Algorithms} Gossip algorithms~\citep{boyd2006gossip,dimakis2010synchgossip} were initially introduced to compute the global average of local vectors with local pairwise communications only (no central coordinator), and were generalized to decentralized optimization. Two types of gossip algorithms appear in the literature: synchronous ones, where all nodes communicate with each other simultaneously \citep{dimakis2010synchgossip,scaman2017optimal,koloskova2019decentralized,shi2014extra}, and randomized ones
\citep{boyd2006gossip,nedic2009ieee}. A third category considers directed (non-symmetric) communication graphs \citep{xi2016directedgraph,Assran_2021}  which are much easier to implement asynchronously.
In the synchronous framework, the communication speed is limited by the slowest node (\emph{straggler} problem),  whereas the classical randomized gossip framework of \cite{boyd2006gossip} assumes communications to happen instantaneously, and thus does not address the question of how to deal with delays.
\cite{Assran_2021,sirb2018delayedgossip,wu2018delays,wang2015delays,Li2016DistributedMD} introduce delays in the analysis of decentralized algorithms; as mentioned in the introduction, their analyses and algorithms are not robust to stragglers, relying on a single upper bound on the delays of all edges.
\cite{matcha2019} study how sparsifying the communication graph can lead to faster decentralized algorithm. Their approach is different from ours in Section~\ref{sec:braess}: they do not consider asynchronous algorithms with physical constraints (delays and capacity), but synchronous algorithms where sequentially matchings are built in the graph. Yet, we observe similar phenomenon as theirs in Section~\ref{sec:braess}. 
We refer the reader to \cite{nedic2018network} for a more complete survey of gossip algorithms.

\subsection{Handling Asynchrony} The dynamics of asynchronous optimization algorithms are significantly more complex than their synchronous counterparts. Their study goes back to the monograph of \cite{bertsekas1989parallel}, where asynchrony is modelled through a global ordering of events, providing the formalism classically used. Most of the recent literature is then derived from a distributed asynchronous variant of SGD called \emph{HOGWILD!} \citep{niu2011hogwild}. 
\cite{mania2016perturbed,leblond2018asynch} introduce alternative orderings of the iterates (before-read and after-read) that correspond to different views of the same sequence of updates, and which simplify the analysis through the use of perturbed or virtual iterates
\citep{mania2016perturbed,zhou2018asynch,hannah2018a2bcd,asynchsgdbeats,stich_asynch_sgd}, even though proofs and convergence guarantees under realistic assumptions on the intricacies between iterates, delays and choices of coordinates, are a challenging problem \citep{sun2017asynchronous,cheung2020fully}. We refer the interested reader to \cite{assran2020advances} for a more exhaustive survey of advances in asynchrony, both in shared-memory and decentralized models.
In this paper, we deal with asynchrony and delays from a different viewpoint: the analysis is inspired by time-delayed ODE systems~\citep{niculescu2001delaystability}, and the assumptions related to delays and asynchrony (such as Assumption~\ref{hyp:delays}) do not need to be translated into discrete-time ones, as in the above references. Finally, we believe our continuous time framework to be particularly adequate for the study and design of asynchronous algorithms, in the decentralized setting as in this paper, but also in centralized settings where it may remove the need to introduce a discrete ordering of events and thus avoid difficulties that lead to unrealistic assumptions, such as the \emph{after/before-read} approaches \citep{leblond2018asynch}.

\section{Delayed Randomized Gossip for Network Averaging\label{sec:gossip}}

Focusing in this section on the Network Averaging Problem, we introduce Delayed Randomized Gossip and state its convergence guarantees. 
We first begin with reminders on randomized gossip \citep{boyd2006gossip}.

\subsection{Randomized gossip}
\label{sec:randomized_gossip}

Let $G=(V,E)$ be a connected graph on the set of nodes $V=[n]$, representing a communication network of agents. Each agent $i\in V$ is assigned a real vector $x_0(i)\in\R^d$. The goal of the averaging (or gossip) problem is to design an iterative procedure allowing each agent in the network to estimate the average $\Bar{x}=\frac{1}{n}\sum_{i=1}^n x_0(i)$ using only local communications, \emph{i.e.},~communications between adjacent agents in the network. 

In randomized gossip \citep{boyd2006gossip}, time $t$ is indexed continuously by~$\R^+$. A Poisson point process \citep{Klenke2014Ppp} (abbreviated as \emph{P.p.p.} in the sequel) $\cP=\set{T_k}_{k\geq 1}$ of intensity $I>0$ on $\R^+$ is generated: $T_0=0$ and $(T_{k+1}-T_k)_{k\geq0}$ are \emph{i.i.d.}~exponential random variables of mean $1/I$.  For positive intensities $(p_{ij})_{(ij)\in E}$ such that $\sum_{(ij)\in E}p_{ij}=I$, for every $k\geq 0$, at $T_k$ an edge $(i_kj_k)$ is \emph{activated} with probability $p_{i_kj_k}/I$, upon which  adjacent nodes $i_k$ and $j_k$ communicate and perform a pairwise update. The \emph{P.p.p.} assumption implies that edges are activated independently of one another and from the past: the activation times of edge $(ij)$ form a \emph{P.p.p.} of intensity $p_{ij}$. 

To solve the gossip problem, \cite{boyd2006gossip} proposed the following strategy: each agent $i\in V$ keeps a local estimate $x_t(i)$ of the average and, upon activation of edge $(i_kj_k)$ at time $T_k\in\R^+$, the activated nodes $i_k,j_k$ average their current estimates:
\begin{equation}\label{eq:avg_gossip}
    x_{T_k}(i_k), \, x_{T_k}(j_k)\,\longleftarrow\,  \frac{x_{T_k-}(i_k)+x_{T_k-}(j_k)}{2} \, .
\end{equation}

Writing $f(x)=\sum_{(ij)\in E}\frac{p_{ij}}{I}f_{ij}(x)$,
for $f_{ij}(x)=\frac{1}{2}\NRM{x(i)-x(j)}^2$ and $x=(x(i))_{i\in V}$, \cite{even2021continuized} observe that local averages~\eqref{eq:avg_gossip} correspond to stochastic gradient steps on $f$:
\begin{equation}\label{eq:gossip-A}
    x_{T_k}\,\longleftarrow\,x_{T_k-}-\frac{K_{i_kj_k}}{p_{i_kj_k}} \nabla f_{i_kj_k}(x_{T_k-})\,,
\end{equation}
for step sizes $K_{i_kj_k}=\frac{p_{i_kj_k}}{2}$. 

These updates can also be derived from coordinate gradient descent steps. Let $A\in\R^{V\times E}$ be such that for all $(ij)\in E$, $Ae_{ij}=\mu_{ij}(e_i-e_j)$ for arbitrary $\mu_{ij}\in\R$, where $(e_{ij})_{(ij)\in E}$ and $(e_i)_{i\in V}$ are the canonical bases of $\R^E$ and $\R^V$.
Then, let $g(\lambda)=\frac{1}{2}\NRM{A\lambda}^2$ for $\lambda\in\R^{E\times d}$, so that the coordinate gradient $\nabla_{ij}g(\lambda)$ writes $\nabla_{ij}g(\lambda)=\mu_{ij}((A\lambda)_i-(A\lambda)_j)$. Thus, provided that for some $\lambda_{T_k-}\in\R^{E\times d}$, $x_{T_k-}-\bar x=A\lambda_{T_k-}$,  the local averaging defined in Equation~\eqref{eq:avg_gossip} is equivalent to $x_{T_k}-\bar x=A\lambda_{T_k}$, where:
\begin{equation}\label{eq:avg_gossip_coord}
    \lambda_{T_k}=\lambda_{T_k-}-\frac{K_{i_kj_k}}{p_{i_kj_k}\mu_{i_kj_k}^2}\nabla_{i_kj_k}g(\lambda_{T_k-})\,,
\end{equation}
for $K_{i_kj_k}=\frac{p_{i_kj_k}}{2}$.
Hence, the gossip algorithm of \cite{boyd2006gossip} can be viewed as a simple block-coordinate gradient descent on variables $\lambda\in\R^{E\times d}$ indexed by the edges of the graph instead of the nodes.


Yet, this continuous-time model with \emph{P.p.p.} activations implicitly assumes instantaneous communications, or some form of waiting. Indeed, the gradient is computed on the current value of the parameter, which is $x_{T_k-}$. In the presence of (heterogeneous) communication delays (Assumption~\ref{hyp:delays}), a more realistic update uses the parameter $x_{S_k}$ at a previous time $S_k<T_k$, to account for the time it takes to compute and communicate the gradient. In this case, the updates write as 
\begin{equation}\label{eq:delayed_RG}
        x_{T_k}\,\longleftarrow\,x_{T_k-}-\frac{K_{i_kj_k}}{p_{i_kj_k}} \nabla f_{i_kj_k}(x_{S_k})\,.
\end{equation}
Equivalently, from the point of view of node $i_k$:
\begin{equation*}
        x_{T_k}(i_k)\leftarrow x_{T_k-}(i_k)- \frac{K_{i_kj_k}}{p_{i_kj_k}}\big(x_{S_k}(i_k)-x_{S_k}(j_k)\big) \, .
\end{equation*}

\subsection{The continuized framework}

Our approach uses the \textbf{continuized framework}~\citep{even2021continuized}, which amounts to consider continuous-time evolution of key quantities, with discrete jumps at the instants of Poisson point processes. This gives the best of both continuous (for the analysis and assumptions) and discrete (for the implementation) worlds. From now on and for the rest of the paper, we assume that Assumption~\ref{hyp:delays} holds.

Edges $(ij)\in E$ locally generate independent \emph{P.p.p.}~$\cP_{ij}$ of intensity~$p_{ij}>0$ (random activation times, with \emph{i.i.d.}~intervals, exponentially distributed with mean $1/p_{ij}$). As mentioned previously, $\cP=\bigcup_{(ij)\in E}\cP_{ij}$ is a \emph{P.p.p.}  of intensity $I=\sum_{(ij)\in E}p_{ij}$, and noting $\cP=\set{T_1<T_2<\ldots}$, at each clock ticking $T_k,\,k\geq1$, an edge $(i_kj_k)$ is chosen with probability $p_{i_kj_k}/I$. This time $T_k$ corresponds to a communication update  between nodes $i_k$ and $j_k$  started at time $T_k-\tau_{i_kj_k}$ \footnote{Standard properties of P.p.p. guarantee that the sequence of points of $\cP_{ij}$ translated by $\tau_{ij}$ is a P.p.p. with the same distribution.}. Assumption~\ref{hyp:delays} ensures that the communication started at time $T_k-\tau_{ij}$ takes some time $\tau^{(k)}\le \tau_{i_kj_k}$ and is thus completed before time $T_k$ so that the update at time $T_k$ is indeed implementable.
Consequently, the sequence $(x_t)_t$ generated by Algorithm~\ref{algo:drg} writes as:
\begin{equation*}
\left\{
\begin{aligned}
    &x_{T_k}(i)=x_{T_k-}(i)\quad \text{if} \quad i\notin \set{i_k,j_k}\,,\\
    &x_{T_k}(i_k) \leftarrow  x_{T_k-}(i_k)  -   \frac{K_{i_kj_k}}{p_{i_kj_k}}\big(x_{T_k-\tau_{i_kj_k}}  (i_k) - x_{T_k-\tau_{i_kj_k}}  (j_k)\big)  ,\\
    &x_{T_k}(j_k) \leftarrow  x_{T_k-}(j_k)  -   \frac{K_{i_kj_k}}{p_{i_kj_k}}\big(x_{T_k-\tau_{i_kj_k}} (j_k) -  x_{T_k-\tau_{i_kj_k}}  (i_k)\big) .
\end{aligned}\right.
\end{equation*}

Algorithm~\ref{algo:drg} is the pseudo-code for \emph{Delayed Randomized Gossip}, from the viewpoint of two adjacent nodes $i$ and~$j$. 
The times $T_{\ell}(ij)$ for $\ell\geq1$ denote the activation times of edge $(ij)$. They follow a P.p.p. of intensity $p_{ij}$, and are sequentially determined by adjacent nodes $i$ and $j$. 




Formally, this decentralized and asynchronous algorithm corresponds to a jump process solution of a \emph{delayed stochastic differential equation}.
Defining $N(\dd t,(ij))$ as the \emph{Poisson} measure on $\R^+\times E$ of intensity $I\dd t\otimes \mathcal{U}_p$ where $\mathcal{U}_p$ is the probability distribution on $E$ proportional to $(p_{ij})_{(ij)\in E}$ ($\mathcal{U}_p((ij))=p_{ij}/I$), we have:
\begin{equation}\label{eq:sde}
    \dd x_t = -\int_{\R^+\times E}\frac{K_{ij}}{p_{ij}}\nabla f_{ij}(x_{t-\tau_{ij}})\dd N(t,(ij))\,.
\end{equation}
Next section presents convergence guarantees for iterates generated by delayed randomized gossip.

\begin{algorithm}[t]
\caption{Delayed randomized gossip, edge $(ij)$}
\label{algo:drg}
\begin{algorithmic}[1]
\STATE Step size $K_{ij}>0$ and intensity $p_{ij}>0$
\vspace{0.5ex}
\STATE Initialization $T_1(ij)\sim {\rm Exp}(p_{ij})$
\vspace{0.5ex}
\FOR{$\ell=1,2,\ldots$}
\vspace{0.5ex}
\STATE $T_{\ell+1}(ij)=T_{\ell}(ij)+{\rm Exp}(p_{ij})$. 
\vspace{0.5ex}
\ENDFOR 
\vspace{0.5ex}
\FOR{$\ell=1,2,\ldots$}
\vspace{0.5ex}
\STATE At time $T_{\ell}(ij)-\tau_{ij}$ for, $i$ sends $\hat{x}_i=x_{T_{\ell}(ij)-\tau_{ij}}(i)$ to $j$ and $j$ sends $\hat{x}_j=x_{T_{\ell}(ij)-\tau_{ij}}(j)$ to $i$. 
\vspace{0.5ex}
\STATE At time $T_{\ell}(ij)$, 
\begin{equation}\label{eq:comm_updates}
    \begin{aligned}
        &x_{T_\ell(ij)}(i)\leftarrow x_{T_\ell(ij)-}(i)- \frac{K_{ij}}{p_{ij}}\big(\hat{x}_i-\hat{x}_j\big) \,,\\
        &x_{T_\ell(ij)}(j)\leftarrow x_{T_\ell(ij)-}(j)- \frac{K_{ij}}{p_{ij}}\big(\hat{x}_j-\hat{x}_i\big) \,,
    \end{aligned}
\end{equation}
\vspace{0.5ex}
\ENDFOR  
\end{algorithmic}
\end{algorithm}

\subsection{Convergence guarantees}

We begin by recalling the key quantities introduced. \emph{(i)}~The constraints inherent to the problem are the communication delays, upper-bounded by constants $\tau_{ij}$. \emph{(ii)} Parameters of the algorithm are: step sizes $K_{ij}>0$ and intensities $p_{ij}$ of the local P.p.p. that trigger communications between adjacent nodes $i$ and $j$. For arbitrary intensities $p_{ij}$ and delay bounds $\tau_{ij}$, we shall provide local conditions on the step sizes $K_{ij}$ that guarantee stability and convergence guarantees. This is to be contrasted with the situation --discussed in Section~\ref{sec:trunc}--  where in addition there are capacity constraints, for which additional conditions on the intensities $p_{ij}$ are needed to prove convergence.

\begin{thm}[\textbf{Delayed Randomized Gossip}]\label{thm:conv_drg}
Assume that for all $(ij)\in E$, we have\footnote{Note that $(ij)\sim (ij)$; constant $e$ is $\exp(1)$.}:
\begin{align}
    &K_{ij}\leq \frac{p_{ij}}{1+\sum_{(kl)\sim (ij)}p_{kl} \big(\tau_{ij}+e\tau_{kl}\big)}\, \cdot\label{eq:K_comm}
\end{align}
Let $\nu_{ij}\equiv K_{ij}$, $(ij)\in E$, and $\tau_{\max}=\max_{(ij)\in E}\tau_{ij}$. Let $\gamma>0$ be such that:
\begin{equation*}\label{eq:gamma}
    \gamma \leq \min\left(\frac{\lambda_2\big(\Delta_G(\nu)\big)}{2}\,,\, \frac{1}{\tau_{\max}}\right) \,.
\end{equation*}
For any $T\ge0$, for $(x_t)_{t\geq0}$ generated with delayed randomized gossip (Algorithm~\ref{algo:drg}) or equivalently by the delayed SDE in Equation~\eqref{eq:sde}, we have:
\begin{equation}\label{eq:convergence}
    \frac{\int_0^T e^{\gamma t} \esp{\NRM{x_t-\Bar{x}}^2}\dd t}{\int_0^Te^{\gamma t}\NRM{x_0-\Bar{x}}^2 \dd t} \leq e^{-\frac{\gamma T}{2}} \frac{1+\frac{\tau_{\max}}{T}}{1-\gamma \tau_{\max}}\,.    
    \vspace{5pt}
\end{equation}
\end{thm}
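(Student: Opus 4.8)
The plan is to analyze the delayed SDE \eqref{eq:sde} via a Lyapunov argument in continuous time, using the reformulation of the gossip updates as block-coordinate gradient descent on $g(\lambda)=\frac12\|A\lambda\|^2$ (with $x_t-\bar x = A\lambda_t$) that is spelled out in Section~\ref{sec:randomized_gossip}. The decay rate in \eqref{eq:convergence} is governed by $\lambda_2(\Delta_G(\nu))$ with $\nu_{ij}=K_{ij}$, and indeed one checks that with the natural choice $\mu_{ij}^2 = K_{ij}/(\text{something})$ the matrix $AA^\top$ is exactly (a multiple of) the weighted Laplacian $\Delta_G(\nu)$ restricted to the subspace orthogonal to $\bfone$, so that $\lambda_2(\Delta_G(\nu))$ is the relevant strong-convexity-type constant for $g$ on the image of $A^\top$. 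Since the present theorem is asserted to follow from general guarantees on "delayed coordinate gradient descent in the continuized framework" (Section~\ref{sec:cgd}), the cleanest route is: (1) verify that delayed randomized gossip is an instance of that abstract scheme, identifying the smoothness constants of the coordinate functions $f_{ij}$ (here each $f_{ij}$ has coordinate-Lipschitz gradient with the obvious constant coming from $\mu_{ij}$), the delays $\tau_{ij}$, and the P.p.p. intensities $p_{ij}$; (2) check that the step-size condition \eqref{eq:K_comm} is precisely the specialization of the abstract stability condition to these constants; (3) read off \eqref{eq:convergence} from the abstract convergence bound.

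The core of the abstract argument — which I would at least sketch here even if the details are deferred — is to control $\E\|x_t-\bar x\|^2$ by a delayed differential inequality. Taking expectations in \eqref{eq:sde} and using that $N(\dd t,(ij))$ has compensator $p_{ij}\,\dd t$, the drift of $\E\|x_t-\bar x\|^2$ contains a "good" term $-2\sum_{ij}K_{ij}\langle x_t-\bar x,\nabla f_{ij}(x_{t-\tau_{ij}})\rangle$ plus a "jump variance" term $\sum_{ij}\frac{K_{ij}^2}{p_{ij}}\|\nabla f_{ij}(x_{t-\tau_{ij}})\|^2$. The delay in the gradient argument is handled by writing $x_{t-\tau_{ij}} = x_t - \int_{t-\tau_{ij}}^t \dd x_s$ and bounding the discrepancy: each such increment is itself a sum of jumps over a window of length $\tau_{ij}$, and its second moment is controlled, in expectation, by $p_{kl}\tau_{ij}$-weighted contributions from neighboring edges $(kl)\sim(ij)$ — this is where the sum $\sum_{(kl)\sim(ij)}p_{kl}(\tau_{ij}+e\tau_{kl})$ in \eqref{eq:K_comm} comes from, the factor $e=\exp(1)$ arising from a standard bound on the exponential moment / number of Poisson arrivals in a delay window of the edge's own clock. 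Under \eqref{eq:K_comm} these error terms are dominated by the good term, leaving a clean inequality of the form $\frac{\dd}{\dd t}\E\|x_t-\bar x\|^2 \le -\lambda_2(\Delta_G(\nu))\,\E\|x_t-\bar x\|^2 + (\text{delayed remainder})$, and a Grönwall-type argument with the integrating factor $e^{\gamma t}$ and $\gamma\le 1/\tau_{\max}$ (to absorb the delayed remainder, using $\int_{t-\tau_{\max}}^t e^{\gamma s}\dd s \le \tau_{\max}e^{\gamma t}$ and $\gamma\tau_{\max}<1$) yields the time-integrated bound \eqref{eq:convergence}, the factor $(1+\tau_{\max}/T)/(1-\gamma\tau_{\max})$ being exactly the boundary correction of this integral Grönwall step.

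The main obstacle is the rigorous treatment of the delay terms: unlike the instantaneous case, $\|\nabla f_{ij}(x_{t-\tau_{ij}})\|^2$ must be related back to $\|x_t-\bar x\|^2$, and the natural bound $\|\nabla f_{ij}(x_{t-\tau_{ij}})\|^2 \le 2\|\nabla f_{ij}(x_t)\|^2 + 2\|\nabla f_{ij}(x_t)-\nabla f_{ij}(x_{t-\tau_{ij}})\|^2$ produces a term involving the fluctuation of $x$ over $[t-\tau_{ij},t]$, which is a stochastic integral against $N$ whose second moment couples to the gradients of all neighboring edges over that window. Closing this loop — showing that the total error contribution, once summed over edges and integrated in time against $e^{\gamma t}$, stays below what the strong-convexity term $\lambda_2(\Delta_G(\nu))$ can absorb, under the stated local condition on $K_{ij}$ — is the crux; it requires carefully bookkeeping which edges' jumps enter which window and using the independence of the P.p.p.'s, together with the elementary fact that a Poisson random variable of mean $m$ satisfies $\E[\text{(overlap count)}] \le m$ and an exponential-moment estimate giving the constant $e$. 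Everything else (existence/uniqueness of the jump process solving \eqref{eq:sde}, the identification $x_t-\bar x\in\mathrm{Im}\,A$, and the final algebraic simplification) is routine once this estimate is in hand.
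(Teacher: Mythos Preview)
Your proposal is essentially the paper's own route: reformulate gossip as delayed block-coordinate descent on $g(\lambda)=\tfrac12\|A\lambda\|^2$ with $x_t-\bar x=A\lambda_t$, then invoke the general Theorem~\ref{thm:dcdm}, whose proof is exactly the Lyapunov argument you sketch (differentiate $\int_0^T e^{\gamma t}\E[G(X(t))-G^\star]\,\dd t$, bound the drift via the increment $X_\ell(t)-X_\ell(t-\tau_k)=\int_{t-\tau_k}^t(\cdots)N_\ell(\dd s)$ and Cauchy--Schwarz, close a delayed differential inequality with a comparison lemma).

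One concrete correction: the constant $e$ in \eqref{eq:K_comm} does \emph{not} come from a Poisson exponential-moment bound. In the paper's bookkeeping the delay contribution produces, after integrating against $e^{\gamma t}$ and shifting the time variable by $\tau_\ell$, a factor $e^{\gamma\tau_\ell}$; the hypothesis $\gamma\le 1/\tau_{\max}$ then gives $e^{\gamma\tau_\ell}\le e$. The Poisson structure only enters through $\E[N_\ell(t-\tau_k,t)]=p_\ell\tau_k$, which is the source of the $p_{kl}\tau_{ij}$ term. Also, the paper keeps the whole estimate in terms of the \emph{delayed} gradients $\|g_{k,t-\tau_k}\|^2$ (splitting $\langle g_{k,t-\tau_k},g_{k,t}\rangle=\|g_{k,t-\tau_k}\|^2+\langle g_{k,t-\tau_k},g_{k,t}-g_{k,t-\tau_k}\rangle$) and only time-shifts at the very end, rather than converting to current gradients via $\|\nabla f_{ij}(x_{t-\tau})\|^2\le 2\|\nabla f_{ij}(x_t)\|^2+\ldots$ as you suggest; both close, but the paper's ordering is what delivers the precise constant in \eqref{eq:K_comm}.
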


Using Jensen inequality then yields the following corollary.
\begin{cor}
Under the same assumptions as Theorem~\ref{thm:conv_drg}, for $(x_t)_{t\geq0}$ generated with delayed randomized gossip, define $(\Tilde x_t)_{t\geq0}$ as the exponentially weighted averaging along the trajectory of $(x_t)$:
\begin{equation*}
    \Tilde x_t = \gamma\frac{\int_0^t e^{\gamma s}x_s\dd s}{e^{\gamma t}-1}\,.
\end{equation*}
Then, for all $T\geq 0$,
\begin{equation*}
    \esp{\NRM{\Tilde x_T -\Bar x}^2}\leq e^{-\frac{\gamma T}{2}}\NRM{x_0-\bar x}^2 \frac{1+\frac{\tau_{\max}}{T}}{1-\gamma \tau_{\max}}\,.
    \vspace{5pt}
\end{equation*}
\end{cor}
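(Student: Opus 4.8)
The plan is to obtain the corollary as an immediate consequence of Theorem~\ref{thm:conv_drg} together with Jensen's inequality applied to the convex function $x\mapsto\NRM{x}^2$. The key observation is that the rescaled exponential weights $s\mapsto \gamma e^{\gamma s}/(e^{\gamma T}-1)$ form a probability density on $[0,T]$, since $\int_0^T \gamma e^{\gamma s}\dd s = e^{\gamma T}-1$. Hence $\tilde x_T$ is an average of the trajectory $(x_s)_{s\in[0,T]}$ against this probability measure, and subtracting the constant vector $\bar x$ gives
\begin{equation*}
    \tilde x_T-\bar x = \frac{\gamma}{e^{\gamma T}-1}\int_0^T e^{\gamma s}\big(x_s-\bar x\big)\dd s\,.
\end{equation*}

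First I would apply Jensen's inequality pathwise, which yields $\NRM{\tilde x_T-\bar x}^2 \le \frac{\gamma}{e^{\gamma T}-1}\int_0^T e^{\gamma s}\NRM{x_s-\bar x}^2\dd s$. Taking expectations and exchanging $\esp{\cdot}$ with the time integral — legitimate by Tonelli's theorem since the integrand is nonnegative — gives
\begin{equation*}
    \esp{\NRM{\tilde x_T-\bar x}^2}\le \frac{\gamma}{e^{\gamma T}-1}\int_0^T e^{\gamma s}\esp{\NRM{x_s-\bar x}^2}\dd s\,.
\end{equation*}
Next I would insert the bound of Theorem~\ref{thm:conv_drg}: the numerator is at most $e^{-\gamma T/2}\frac{1+\tau_{\max}/T}{1-\gamma\tau_{\max}}\int_0^T e^{\gamma s}\NRM{x_0-\bar x}^2\dd s = e^{-\gamma T/2}\frac{1+\tau_{\max}/T}{1-\gamma\tau_{\max}}\cdot\frac{e^{\gamma T}-1}{\gamma}\NRM{x_0-\bar x}^2$. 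The factor $(e^{\gamma T}-1)/\gamma$ cancels exactly against the prefactor $\gamma/(e^{\gamma T}-1)$, leaving precisely $e^{-\gamma T/2}\NRM{x_0-\bar x}^2\frac{1+\tau_{\max}/T}{1-\gamma\tau_{\max}}$, which is the asserted inequality.

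There is essentially no obstacle in this argument: the only points requiring care are checking the normalization of the exponential weights and justifying the interchange of expectation and integration, both of which are routine. All of the analytic content — in particular the stability conditions~\eqref{eq:K_comm} and the appearance of $\lambda_2(\Delta_G(\nu))$ through the choice of $\gamma$ — is already contained in Theorem~\ref{thm:conv_drg}; the corollary merely repackages its time-averaged guarantee as a guarantee on the single averaged iterate $\tilde x_T$.
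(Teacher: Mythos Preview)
Your proposal is correct and follows exactly the approach indicated in the paper, which simply says ``Using Jensen inequality then yields the following corollary.'' You have supplied the straightforward details: the exponential weights define a probability measure on $[0,T]$, Jensen for $x\mapsto\NRM{x}^2$ bounds $\NRM{\tilde x_T-\bar x}^2$ by the weighted average of $\NRM{x_s-\bar x}^2$, and Theorem~\ref{thm:conv_drg} then gives the result after the normalizing factors cancel.
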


An essential aspect of Theorem \ref{thm:conv_drg} lies in the explicit sufficient conditions for convergence it establishes for our proposed schemes, and on how they only rely on (upper bounds on) individual delays. We now discuss the \emph{\textbf{asynchronous speedup}} obtained by fine-tuning algorithm parameters according to delays.

For many graphs of interest such as grids, hypergrids, trees\ldots and bounded edge parameters $\nu_{ij}$, in the large network limit $n\to\infty$ one has $\lambda_2(\Delta_G(\nu))\to0$\footnote{Networks for which this fails are known as \emph{expanders}.} and so $\lambda_2(\Delta_G)\wedge 1/\tau_{\max} 
=\lambda_2(\Delta_G)$.
The \emph{asynchronous speedup} consists in having a rate of convergence as the eigengap of the Laplacian of the graph weighted by local communication constraints: the term $\lambda_2(\Delta_G(K))$, where each $K_{ij}$ is impacted only by local quantities. 

As mentioned in the introduction, this quantity should be understood as the analogue in decentralized optimization of the squared diameter of the graph (using time distances) in~\eqref{eq:lower_hetero} in centrally coordinated algorithms and as expected, gossip algorithms  are affected by spectral properties of the graph. In Theorem \ref{thm:conv_drg}, these properties reflect delay heterogeneity across the graph:
here, $\lambda_2(\Delta_G(K))^{-1}$ the mixing time of a random walk on the graph where  jumping from node $i$ to $j$ takes a time $\tilde\tau_{ij}=K_{ij}^{-1}$. In contrast, previous analyses (of synchronous or asynchronous algorithms) involve the mixing time of a random walk with times between jumps set to a quantity that is linearly dependent on $\tau_{\max}$. We coin this discrepancy the \emph{asynchronous speedup}.

Equation~\eqref{eq:K_comm} suggests a scaling of $p_{ij}\approx 1/\tau_{ij}$, giving local weights $K_{ij}$ 
of order $1/({\rm degree}_{ij}\tau_{ij})$ where ${\rm degree}_{ij}$ is the degree of edge $(ij)$ in the edge-edge graph.
On the other hand, synchronous algorithms are slowed down  by the slowest node: the equivalent term would be of order $\lambda_2(\Delta_G(1/({\rm degree}_{ij}\tau_{\max}))$.
Indeed, for a \emph{gossip matrix} $W\in\R^{V\times V}$ ($W$ is a symmetric and stochastic matrix), the equivalent factor in synchronous gossip \citep{dimakis2010synchgossip} is $\lambda_2(\Delta_G(W_{ij}\tau_{\max}))$, and $W_{ij}$ is usually set as $1/{\rm degree}_{ij}$ in order to ensure convergence.

Finally, assume that all $\tau_{ij}$ are equal to $\tau_{\max}$, and set $p_{ij}=1/\tau_{\max}$. We then recover  $\lambda_2(\Delta_G(1/({\rm degree}_{ij}\tau_{\max}))$ in the rate of convergence $\gamma$, thus yielding the same rates as synchronous algorithms \citep{dimakis2010synchgossip} and asynchronous algorithms that only use a global upper bound on the delays \citep{Assran_2021,sirb2018delayedgossip,wu2018delays,wang2015delays,Li2016DistributedMD,pmlr-v80-lian18a}:  albeit being asynchronous, these algorithms do not take advantage of an asynchronous speedup in their convergence speed. 



\subsection{A delayed \emph{ODE} for mean values in gossip}\label{sec:ode}

Before proving Theorem~\ref{thm:conv_drg}, we provide some intuition for 
its conditions and the resulting convergence rate. We do this by studying the means of the iterates, that verify a delayed linear ordinary differential equation, easier to study than the process itself, for which we provide stability conditions.

Denoting $y_t=\esp{x_t}\in\R^{n\times d}$, for $t\geq0$, where $(x_t)_{t\geq0}$ is generated using delayed randomized gossip updates~\eqref{eq:delayed_RG}, 
we have:
\begin{equation}\label{eq:ode}
    \frac{\dd y_t}{\dd t} = -\sum_{(ij)\in E}K_{ij}\nabla f_{ij}(y_{t-\tau_{ij}})\,.
\end{equation}
Indeed, for any $t\geq 0$ and $\dd t>0$,
\begin{align*}
    \esp{x_{t+\dd t}|x_t}-x_t&= -x_t +(1-I\dd t)x_t + o(\dd t) \\
    &\quad+ \dd t\sum_{(ij)\in E} p_{ij} \big(x_t-\frac{K_{ij}}{p_{ij}}\nabla f_{ij} (x_{t-\tau_{ij}})\big)\\
    &=-\dd t\sum_{(ij)\in E} K_{ij}\nabla f_{ij} (x_{t-\tau_{ij}})+o(\dd t)\,. 
\end{align*}
Taking the mean, dividing by $\dd t$ and making $\dd t\to 0$ leads to the delayed ODE verified by $y_t=\esp{x_t}.$
Such delay-differential ODEs are classical~\citep{niculescu2001delaystability} yet their stability properties are notoriously hard to characterize.
This is typically attacked by means of  \emph{Lyapunov-Krasovskii} functionals or \emph{Lyapunov-Razumikhin} functions~\citep{lyapukrasov2009}. 
Alternatively, sufficient conditions for convergence and stability guarantees on $(y_t)$ can be obtained, under specific conditions, by enforcing stability of the original system after  \emph{linearizing} it with respect to delays ~\citep{massoulie2002stability}.
Linearizing in the sense of~\cite{massoulie2002stability} means making the approximation $y_{t-\tau_{ij}}= y_t-\tau_{ij}\frac{\dd y_t}{\dd t}$. Under this approximation, we have:
\begin{equation*}
    \frac{\dd y_t}{\dd t}=-\sum_{(ij)\in E} K_{ij}\big(\nabla f_{ij}(y_t) -\tau_{ij} \nabla f_{ij}(\frac{\dd y_t}{\dd t})\big)\,.
\end{equation*}
For any weights $\nu_{ij}$ and vector $z$, $\sum_{ij}\nu_{ij}\nabla f_{ij}(z)=\Delta_G(\nu)z$. Thus the delay-linearized ODE reads
\begin{equation}\label{eq:lin_del_ode}
    (I-\Delta_G(\{K_{ij}\tau_{ij}\}))\frac{\dd y_t}{\dd t}=-\Delta_G(\{K_{ij}\})y_t\,.
\end{equation}
This delay-linearized ODE \eqref{eq:lin_del_ode} provides intuition on the behavior of $\esp{x_t}$. Indeed, \eqref{eq:lin_del_ode} is stable provided that $\rho(\Delta_G(\{\tau_{ij}K_{ij}\}))<1$, in which case it has a linear rate of convergence of order $\lambda_2(\Delta_G(\set{K_{ij}})$. 

Even though this stability condition and the rate of convergence are only heuristics, since \eqref{eq:lin_del_ode} is obtained through an approximation of the delayed ODE verified by $\esp{x_t}$ \eqref{eq:ode}, this stability condition for the delay-linearized system implies stability of the original delayed system under assumptions on the matrices and delays involved \citep{massoulie2002stability}, that hold in our case, leading to the following 
\begin{prop}
\label{prop:stability}
Assume that the spectral radius of the weighted Laplacian $\Delta_G(\{\tau_{ij}K_{ij}\})$ verifies $\rho(\Delta_G(\{\tau_{ij}K_{ij}\}))<1$. Then the delayed \emph{ODE}~\eqref{eq:ode} is stable.
\end{prop}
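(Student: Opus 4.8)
The plan is to analyze the linear delay-differential equation \eqref{eq:ode} through its characteristic roots. Since $f_{ij}(x)=\tfrac12\NRM{x(i)-x(j)}^2$ has a linear gradient and \eqref{eq:ode} decouples across the $d$ coordinates, I may assume $d=1$ and write \eqref{eq:ode} as $\dot y_t=-\sum_{(ij)\in E}K_{ij}\,\Delta_{ij}\,y_{t-\tau_{ij}}$, where $\Delta_{ij}$ denotes the (symmetric, positive semidefinite) Laplacian of the single edge $(ij)$, so that $\Delta_G(\nu)=\sum_{(ij)}\nu_{ij}\Delta_{ij}$ and $\langle\Delta_{ij}v,v\rangle=(v_i-v_j)^2\ge0$. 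The vector $\one$ lies in the kernel of every $\Delta_{ij}$, hence $t\mapsto\langle\one,y_t\rangle$ is conserved; writing $y_t=\bar y_0\one+z_t$ with $z_t\perp\one$, the component $z_t$ solves the same equation restricted to $\one^\perp$, and stability of \eqref{eq:ode} amounts to showing $z_t\to0$.

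First I would invoke the classical spectral theory of retarded delay-differential equations (see \citep{niculescu2001delaystability}): the characteristic function $h(s)=\det\!\big(sI+\sum_{(ij)}K_{ij}e^{-s\tau_{ij}}\Delta_{ij}\big)$ is an exponential polynomial with $h(s)=s^{n}+O(s^{n-1})$ uniformly on $\{\mathrm{Re}(s)\ge -a\}$ for each fixed $a$, so it has only finitely many zeros in any such half-plane, and the long-time behaviour of solutions is governed by $\sup\{\mathrm{Re}(s):h(s)=0\}$ — restricted, for $z_t$, to the block acting on $\one^\perp$. It therefore suffices to prove that every characteristic root $s\neq0$ has $\mathrm{Re}(s)<0$, and that $s=0$ occurs only with eigenvector $\propto\one$ (and is simple).

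The key step is an a priori estimate locating the roots. Let $(s,v)$ be a characteristic pair with $\NRM{v}=1$; taking the Hermitian inner product of $sv+\sum_{(ij)}K_{ij}e^{-s\tau_{ij}}\Delta_{ij}v=0$ with $v$ yields the scalar identity $s=-\sum_{(ij)}c_{ij}e^{-s\tau_{ij}}$, with $c_{ij}:=K_{ij}\langle\Delta_{ij}v,v\rangle\ge0$. By hypothesis $\sum_{(ij)}c_{ij}\tau_{ij}=\langle\Delta_G(\{\tau_{ij}K_{ij}\})v,v\rangle\le\rho(\Delta_G(\{\tau_{ij}K_{ij}\}))<1$, while $\sum_{(ij)}c_{ij}=\langle\Delta_G(\{K_{ij}\})v,v\rangle$, which is $>0$ unless $v\parallel\one$ since $G$ is connected. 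Using $e^{-s\tau}=1-s\int_0^{\tau}e^{-su}\,du$, rewrite the identity as $s\,w=-\sum_{(ij)}c_{ij}$ with $w:=1-\sum_{(ij)}c_{ij}\int_0^{\tau_{ij}}e^{-su}\,du$. Now suppose, for contradiction, $\mathrm{Re}(s)\ge0$: then $\big|\int_0^{\tau_{ij}}e^{-su}\,du\big|\le\tau_{ij}$, hence $\mathrm{Re}(w)\ge1-\sum_{(ij)}c_{ij}\tau_{ij}>0$, so $w\neq0$ and $\mathrm{Re}(s)=-\big(\sum_{(ij)}c_{ij}\big)\mathrm{Re}(w)/|w|^2\le0$, with equality only if $\sum_{(ij)}c_{ij}=0$, i.e. $v\parallel\one$, in which case the original equation forces $sv=0$ and $s=0$. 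This rules out nonzero roots with nonnegative real part; on $\one^\perp$ (where $\sum c_{ij}>0$) there is no root with $\mathrm{Re}(s)\ge0$ at all, and no root at $s=0$ either (as $\Delta_G(\{K_{ij}\})v=0$ forces $v\parallel\one$). Together with the finiteness of roots in half-planes this gives a uniform spectral gap $\sup\{\mathrm{Re}(s)\}=-\delta<0$ on $\one^\perp$, hence $z_t\to0$ exponentially and \eqref{eq:ode} is stable. This is precisely the mechanism underlying \citep{massoulie2002stability}, to which one may alternatively defer wholesale.

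The main obstacle I expect lies not in the algebra — which is routine — but in the appeal to DDE theory: rigorously passing from "no characteristic root in the closed right half-plane except a simple zero" to exponential stability of the functional differential equation (the finiteness of roots in vertical strips, the spectral decomposition of the solution semigroup for retarded equations), which is classical but must be cited carefully; and one must check that the conserved direction $\one$ does not interfere, which is handled by the $\one/\one^\perp$ splitting above.
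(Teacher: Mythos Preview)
Your proof is correct but proceeds along a different route from the paper's. The paper does not work directly in node variables: it first passes to edge variables via a matrix $A\in\R^{V\times E}$ with $Ae_{ij}=\mu_{ij}(e_i-e_j)$, rewrites the delayed ODE as
\[
\frac{\dd \lambda_t(ij)}{\dd t}=-\sum_{(kl)\in E} M_{(ij),(kl)}\,\lambda_{t-\tau_{ij}}(kl)\,,
\]
and then applies Theorem~1 of \cite{massoulie2002stability} as a black box. The passage to edge variables is not cosmetic: in node variables the $i$-th equation carries several distinct delays $\tau_{ij}$ (one per incident edge), which does not match the single-delay-per-coordinate form required by that theorem; the edge formulation gives exactly one delay $\tau_{ij}$ per coordinate $(ij)$. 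The paper then chooses $\mu_{ij}^2=K_{ij}$ so that $M=A^\top A$ is symmetric positive semidefinite, and finishes by the identity $\rho(D(\tau)A^\top A)=\rho(AD(\tau)A^\top)=\rho(\Delta_G(\{\tau_{ij}K_{ij}\}))$.

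You instead stay in node variables, split off the conserved direction $\one$, and carry out the characteristic-root argument by hand: taking the Hermitian inner product with an eigenvector collapses the matrix equation to the scalar identity $s=-\sum_{(ij)}c_{ij}e^{-s\tau_{ij}}$ with $c_{ij}\ge 0$ and $\sum c_{ij}\tau_{ij}<1$, from which $\mathrm{Re}(s)\le 0$ follows by the $e^{-s\tau}=1-s\int_0^\tau e^{-su}\dd u$ trick. This is essentially the mechanism inside \cite{massoulie2002stability}, reproduced directly rather than invoked; what you gain is that you never need the edge reformulation or the specific structural hypotheses of that theorem, and the argument is self-contained modulo the standard DDE spectral theory you flag at the end. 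What the paper's route gains is brevity and a clean reduction to an existing result that the authors use elsewhere in the paper anyway.
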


Consequently, the stability conditions (necessary conditions on step sizes $K_{ij}$ in Equation~\eqref{eq:K_comm})  obtained in Theorem~\ref{thm:conv_drg} are very natural. Indeed, a simple way to enforce $\rho(\Delta_G(\{\tau_{ij}K_{ij}\})<1$ based on local conditions consists in imposing $\sum_{j}\tau_{ij}K_{ij}<1$ for all $i$. This is a weaker condition than the one stated in Theorem~\ref{thm:conv_drg}, but it only gives stability of the means.
Furthermore, the rate of convergence of delayed randomized gossip in Theorem~\ref{thm:conv_drg}, that takes the form of the eigengap of a weighted graph Laplacian, is also that of any solution of the delay-linearized ODE \eqref{eq:lin_del_ode}.

\begin{proof}[Proof of Proposition~\ref{prop:stability}]
    For $A\in\R^{V\times E}$ as defined in Section~\ref{sec:randomized_gossip} for non-null weights $\mu_{ij}$, define the following delayed ODE:
    \begin{equation}\label{eq:ode_edges}
        \frac{\dd \lambda_t}{\dd t} = - \sum_{(ij)\in E} \frac{K_{ij}}{\mu_{ij}^2} e_{ij}^\top A^\top A\lambda_{t-\tau_{ij}}\,.
    \end{equation}
    For $(y_t)$ solution of \eqref{eq:ode}, if there exists $\lambda_0$ such that $A\lambda_0=y_0$, then $y_t=A\lambda_t$ for all $t$, where $\lambda_t$ is solution of \eqref{eq:ode_edges} initialized at the value $\lambda_0$. Then, since $AA^\top$ is the Laplacian of graph $G$ with weights $\mu_{ij}^2>0$, $A$ is of rank $n-1$. For all $\lambda$, $A\lambda$ is in the orthogonal of $\R\one$ ($\mathbf{1}\in\R^V$ is the vector with all entries equal to $1$), so that ${\rm Im}(A)$ is exactly the orthogonal of $\R\one$. Finally, since for $(y_t)$ a solution of \eqref{eq:ode}, $y_t-(\one^\top y_0)\one$ is also solution of \eqref{eq:ode} and takes values in the orthogonal of $\R\one$, it is sufficient to prove stability of \eqref{eq:ode_edges}.
    
    To that end, we use Theorem 1 of \cite{massoulie2002stability}. For $z\in\R^E$, let $D(z)\in\R^{E\times E}$ be the diagonal matrix with diagonal equal to~$z$. Let $M=D(\frac{K}{\mu^2})A^\top A$. Then, the delayed ODE \eqref{eq:ode_edges} writes as:
    \begin{equation*}
        \frac{\dd \lambda_t(ij)}{\dd t}=-\sum_{(kl)\in E} M_{(ij),(kl)} \lambda_{t-\tau_{ij}}(kl)\quad,\,(ij)\in E\,,
    \end{equation*}
    and ODE that takes the same form as Equation (7) in \cite{massoulie2002stability}, for $D_{(ij)}^{\leftarrow}=\tau_{ij}$, $D_{(ij)}^{\rightarrow}=0$ and $D_{(ij)}=\tau_{ij}$, $R=E$ and with our matrix $M$. In order to ensure that $M$ is symmetric and positive semi-definite, we take $\mu_{ij}^2=K_{ij}$, to have $M=A^\top A$. The assumptions of Theorem 1 of \cite{massoulie2002stability} are verified, so that the delayed ODE \eqref{eq:ode_edges} is table if $\rho(D(\tau)M)<1$. We then write $\rho(D(\tau)M)=\rho(D(\sqrt{\tau})A^\top AD(\sqrt{\tau})=\rho(AD(\sqrt{\tau})(AD(\sqrt{\tau}))^\top)$, and notice that $AD(\sqrt{\tau})(AD(\sqrt{\tau}))^\top$ is the Laplacian of graph $G$ with weights $\mu_{ij}^2\tau_{ij}=K_{ij}\tau_{ij}$, concluding the proof.
\end{proof}

\subsection{Proof of Theorem~\ref{thm:conv_drg}}

In the proof, we use the assumed bounds $\tau_{ij}$ on actual delays in our algorithm to ensure that communications between $i$ and $j$ started at a time $t-\tau_{ij}$ induce communication updates at time $t$. Our algorithms thus behave exactly as if individual communication delays coincide with these upper bounds $\tau_{ij}$, which allows us to analyze algorithms with constant, albeit heterogeneous delays.

In contrast an analysis in discrete time would use a global iteration counter, and discrete-time delays would not be constant,making the analysis either much more involved or unable to capture the asynchronous speedup described above.

\begin{proof}
Theorem~\ref{thm:conv_drg} is obtained by applying a general result on delayed coordinate descent in the continuized framework that we detail in Section~\ref{sec:cgd}.

Specifically, we consider the function:
\begin{equation*}
    g(\lambda)=\frac{1}{2}\NRM{A\lambda}^2\,\quad\lambda\in\R^{E\times d}
\end{equation*}
for some $A\in\R^{V\times E}$ such that $Ae_{ij}=\mu_{ij}(e_i-e_j)$ for all $(ij)\in E$, where we let  $\mu_{ij}=-\mu_{ji}$ by convention.
As in Section~\ref{sec:ode}, there exists $\lambda\in \R^{E\times d}$ such that $x_0-\bar x=A\lambda$.
Let $(\lambda_t)_{t\geq0}$ be defined with $\lambda_0=\lambda$, and the delayed coordinate gradient steps at the clock tickings of the \emph{P.p.p.}'s:
\begin{equation*}
    \lambda_{T_k}\leftarrow \lambda_{T_k-} - \frac{K_{i_kj_k}}{p_{i_kj_k}} \nabla_{i_kj_k} g(\lambda_{T_k-\tau_{i_kj_k}})\,.
\end{equation*}
For all $t\geq 0$, we then have $x_t=\bar x + A\lambda_t$, where we recall that the process $(x_t)$ follows the delayed randomized gossip updates \eqref{eq:comm_updates} of Algorithm~\ref{algo:drg}. Then, for all $t\geq 0$, we have $g(\lambda_t)=\frac{1}{2}\NRM{A\lambda_t}^2=\frac{1}{2}\NRM{x_t-\bar x}^2$.

The result of Theorem~\ref{thm:conv_drg} follows from a control of  $\esp{g(\lambda_t)}$ that  is a direct consequence of Theorem~\ref{thm:dcdm} in next section with the specific choices  $m=|E|$ and coordinate blocks corresponding to edges. The assumptions of Theorem~\ref{thm:dcdm} are verified with $L_{ij}=2\mu_{ij}^2$, $M_{(ij),(kl)}=\sqrt{L_{ij}L_{kl}}$, and strong convexity parameter $\lambda_2(\Delta_G(\nu_{ij}=\mu_{ij}^2))$ for the specific choice $\mu_{ij}^2=K_{ij}$, as is shown in Lemmas~\ref{lem:local_smoothness1}, \ref{lem:local_smoothness2}, \ref{lem:sc} in the Appendix, giving us exactly Theorem~\ref{thm:conv_drg}.
\end{proof}

\section{Delayed coordinate gradient descent in the continuized framework\label{sec:cgd}}

Let $G$ be a $\sigma$-strongly convex function on $\R^D$. For $k=1,...,m$, let $E_k$ be a subspace of $\R^d$, and assume that:
\begin{equation}
    \R^d=\bigoplus_{k=1}^n E_k\,.
\end{equation}
For $x\in \R^D$, let $x_k$ denote its orthogonal projection on $E_k$ and let $\nabla_k G:=(\nabla G)_k$, and assume that the subspaces $E_1,...,E_m$ are orthogonal.  For $k,\ell\in [m]$, we say that $k$ and $\ell$ are adjacent and we write $k\sim \ell$ if and only if $\nabla_k \nabla_\ell G$ is not identically constant equal to $0$. This induces a symmetric graph structure on the coordinates $k\in[m]$.
In the context of gossip network averaging, $m=|E|$ and each subspace $E_k$ corresponds to an edge $e_k=(i_kj_k)$ of the graph; in that context, we have $k\sim \ell$ if and only if edges $e_k$ and $e_\ell$ share a node. 

In the network averaging problem previously described, the function $G$ used is $g(\lambda)=\frac{1}{2}\NRM{A\lambda}$ for $\lambda\in\R^{E\times d}$ the edge variables. Subspaces are $E_{ij}$ of dimension $d$ for $(ij)\in E$ (and $m=|E|$) corresponding to variables of $\lambda$ associated to edge $(ij)$.

\subsection{Algorithm and assumptions}

\subsubsection{Continuized delayed coordinate gradient descent algorithm} 
For $k\in[m]$, let $\cP_k$ be a \emph{P.p.p.} of intensity $p_k$ denoting the times at which an update can be performed on subspace $E_k$. For $t\in\cP_k$ let $\eps_k(t)\in\{0,1\}$ be the indicator of whether the update is performed or not. Let also $\eta_k$ be some positive step size for $k\in[m]$. Consider then the following continuous-time process $X(t)$, where $X_k(t)$ (the projection of $X(t)$ on $E_k$) evolves according to:
\begin{equation}\label{eq:delayed_SDE_coord_descent}
    \dd X_k(t) = -  \eps_k(t)\eta_k \nabla_k G((X(t-\tau_{k}))\cP_k(\dd t)\,,
\end{equation}
where $\cP_k(\dd t)$ corresponds to a Dirac at the points of the \emph{P.p.p.} $\cP_k$.
In words, $(X(t))_{t\geq0}$ is a jump process that takes coordinate gradient descent steps along subspaces $(E_k)_{k\in[m]}$ at the times of independent Poisson point processes $(\cP_k)_{k\in[m]}$.
We introduced variables $(\eps_k(t))_{k\in[m],t\in\cP_k}$ with values in $\set{0,1}$ to represent capacity constraints: $\eps_k(t)=0$ if the update at time $t\in\cP_k$ cannot be performed due to some constraint saturation; these variables $\eps_k(t)$ will be essential in our  treatment of  communication and computation capacity constraints in Section~\ref{sec:trunc}.

\subsubsection{Regularity assumptions}
$G$ is $\sigma$-strongly convex, and $L_k$-smooth on $E_k$ for $k\in[m]$.
Furthermore, there exist non-negative real numbers $M_{k,\ell}$ and $M_{\ell,k}$
for $k\sim \ell$  such that for all $k=1,...,m$ and $x,y\in\R^D$, we have:
\begin{equation}\label{eq:smooth_dcdm_loc}
    \NRM{\nabla_kG(x)-\nabla_k G(y)}\leq \sum_{\ell\sim k} M_{k,\ell}\NRM{x_\ell-y_\ell}\,.
\end{equation}
When $G$ is $L_k$ smooth on $E_k$ as we assume, the above condition is verified by the choice $M_{i,j}=L_j$, $i\sim j$.
If $\nabla_kG$ is $M_k$-Lipschitz, Condition~\eqref{eq:smooth_dcdm_loc} is verified by the choice $M_{k,\ell}=M_k$.
Assumption \eqref{eq:smooth_dcdm_loc} however allows for more freedom, and is particularly well suited for our analysis. In particular for decentralized optimization, it will be convenient  to take $M_{k\ell}=\sqrt{L_kL_\ell}$.

\subsubsection{Assumptions on variables $\eps_k(t),\,t\in\cP_k$}
For $t\in\cP_k$, random variable $\eps_k(t)$ is $\sigma\big( \cP_\ell\cap[t-\tau_k,t) , \ell\in [m])$-measurable, and there exists a constant $\eps_k>0$ such that:
\begin{equation*}
    \esp{\eps_k(t)}\geq\eps_k\,,
\end{equation*}
Furthermore, we assume that $\eps_k(t)$ is negatively correlated with each quantity $N_\ell(t-\tau_k,t)=|\cP_\ell\cap[t-\tau_k,t]|$, \emph{i.e.} that for all $k,\ell\in[m]$,
\begin{equation}\label{eq:eps_negcor}
    \esp{\eps_k(t)N_\ell(t-\tau_k,t)}\leq \esp{\eps_k(t)}\esp{N_\ell(t-\tau_k,t)}\,.
\end{equation}
In our subsequent treatment of communication and capacity contraints, we shall see that the above assumptions are verified for  $\epsilon_k(t)$ the indicator that $t$ is a point a \emph{truncated P.p.p.} $\Tilde{\cP}_k$ defined as follows:

\begin{defn}[\textbf{Truncated \emph{P.p.p.}}] \label{def:trunc_ppp} Let $(\cP_k)_{1\leq k\leq m}$ be \emph{P.p.p.} of respective intensities $(p_k)_{1\leq k\leq m}$, $(\tau_k)_{1\leq k\leq m}$ non-negative delays. Let $N_k$ be the Poisson point measures associated to $\cP_k$, $k\in[m]$. For $(\cC_r)_{1\leq r \leq M}$ subsets of $[m]$, we define the truncated Poisson point measures $(\Tilde{N})_{1\leq k\leq m}$ of intensities $(p_k)_{1\leq k\leq m}$ and parameters $(\tau_k)_k,(q_{k,r})_{k\in[m],r\in[M]}$ as:
\begin{equation}
    \dd \Tilde{N}_k(t) = 1_\set{\bigcap_{1\leq k\leq M}\set{\sum_{\ell\in\cC_r}N_\ell([t-\tau_k,t))\le q_{k,r}}}\dd N_k(t)\,,
\end{equation}
and we let $\Tilde{\cP}_k$ be the point process associated to this point measure. 
\end{defn}

\subsection{Convergence guarantees and analysis}

The main result of this Section is the following 

\begin{thm}[\textbf{Delayed Coordinate Gradient Descent}]\label{thm:dcdm} 
Under the stated  assumptions on regularity of $G$ and on variables $\epsilon_k(t)$, assume further that the step sizes $\eta_k$ are given by  $\eta_k=\frac{K_k}{p_kL_k}$ where for all $k\in[m]$,
\begin{equation}\label{eq:K_k}
    K_k\leq \frac{p_k}{1+\sum_{\ell\sim k}p_\ell \left(\frac{\tau_k M_{k,\ell}+e\tau_\ell M_{\ell,k}}{\sqrt{L_k L_\ell}}\right)}\,,
\end{equation}
and let $ \gamma\in \dR_+$ be such that:
\begin{equation}\label{eq:gamma_cgd}
    \gamma < \min\left(\sigma \min_{k} \frac{\eps_kK_k}{L_k}, \frac{1}{\tau_{\max}}\right)\,,
\end{equation}
where $\tau_{\max}:=\max_{k\in [m]}\tau_k$.
Then for any $T>0$ the solution $X(t)$ to Equation~\eqref{eq:delayed_SDE_coord_descent} verifies 
\begin{equation}
    \frac{\int_0^T e^{\gamma t} \esp{G(X(t))-G(x^\star)}\dd t}{\int_0^Te^{\gamma t} \big(G(X(0))-G(x^\star)\big)\dd t } \leq e^{-\frac{\gamma T}{2}} \frac{1+\frac{\tau_{\max}}{T}}{1-\gamma \tau_{\max}}\,\cdot
    \vspace{5pt}
\end{equation}
\end{thm}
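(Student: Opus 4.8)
The plan is to prove Theorem~\ref{thm:dcdm} via a Lyapunov argument tailored to the delayed jump process, using the exponentially-weighted integral functional that appears on the left-hand side of the statement. The natural candidate Lyapunov function is $V(t) = e^{\gamma t}\big(G(X(t)) - G(x^\star)\big)$, possibly augmented with a Lyapunov--Krasovskii-type correction term that integrates the recent history of the process over windows of length $\tau_{\max}$, in order to absorb the delay terms. The first step is to compute the infinitesimal drift of $G(X(t)) - G(x^\star)$. Since $X$ jumps at the points of $\cP_k$, by the compensation formula for Poisson point processes,
\begin{equation*}
  \frac{\dd}{\dd t}\,\esp{G(X(t))} = \sum_{k=1}^m p_k\,\esp{\,\eps_k(t)\big(G(X(t) - \eta_k \nabla_k G(X(t-\tau_k))) - G(X(t))\big)\,}\,.
\end{equation*}
Using $L_k$-smoothness of $G$ along $E_k$ to bound the jump increment, each term is at most $-\eta_k \langle \nabla_k G(X(t)), \nabla_k G(X(t-\tau_k))\rangle + \frac{L_k \eta_k^2}{2}\NRM{\nabla_k G(X(t-\tau_k))}^2$, times $p_k \eps_k(t)$.

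The second step is to handle the delay mismatch between $\nabla_k G(X(t))$ and $\nabla_k G(X(t-\tau_k))$. Write $\nabla_k G(X(t-\tau_k)) = \nabla_k G(X(t)) - \big(\nabla_k G(X(t)) - \nabla_k G(X(t-\tau_k))\big)$, and control the difference using the local smoothness assumption~\eqref{eq:smooth_dcdm_loc}: $\NRM{\nabla_k G(X(t)) - \nabla_k G(X(t-\tau_k))} \le \sum_{\ell \sim k} M_{k,\ell}\NRM{X_\ell(t) - X_\ell(t-\tau_k)}$. The displacement $X_\ell(t) - X_\ell(t-\tau_k)$ is itself a sum over the jumps of $\cP_\ell$ in the window $[t-\tau_k, t)$, each of size $\eta_\ell \eps_\ell\NRM{\nabla_\ell G(X(\cdot - \tau_\ell))}$. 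This is where the negative-correlation hypothesis~\eqref{eq:eps_negcor} and the measurability of $\eps_k(t)$ with respect to the past window $[t-\tau_k,t)$ enter: they let us take expectations of products of $\eps_k(t)$ with counting variables $N_\ell(t-\tau_k,t)$ and decouple them, turning the $\eps_k(t)$ factors into the lower bounds $\eps_k$ in the favorable (negative drift) terms while keeping control of the cross terms. After Cauchy--Schwarz and Young's inequality (with the constant $e$ from the statement appearing as the optimal Young parameter across a geometric-type bound on how many windows overlap), one arrives at a differential inequality of the form
\begin{equation*}
  \frac{\dd}{\dd t}\,\esp{G(X(t)) - G(x^\star)} \le -\sum_k c_k\,\esp{\NRM{\nabla_k G(X(t))}^2} + \text{(history terms over }[t-\tau_{\max},t)\text{)}\,,
\end{equation*}
where the condition~\eqref{eq:K_k} on $K_k$ is exactly what guarantees $c_k > 0$ after the history terms are folded in, and $\sigma$-strong convexity converts $\sum_k c_k \NRM{\nabla_k G}^2 \ge (\text{const})\cdot(G(X(t)) - G(x^\star))$, yielding the rate $\gamma$ of~\eqref{eq:gamma_cgd}.

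The third step is to integrate against $e^{\gamma t}$ and deal with the residual history terms. The condition $\gamma \tau_{\max} < 1$ ensures that the exponentially weighted integral of a history term over $[t-\tau_{\max},t)$ is bounded by a constant close to $1$ times the same integral without the shift (this is a Fubini/change-of-variables computation giving the factor $\frac{1}{1-\gamma\tau_{\max}}$), so that the history contributions can be moved to the left-hand side. The boundary term from integrating $\frac{\dd}{\dd t}(e^{\gamma t}\esp{G(X(t))-G(x^\star)})$ produces the $e^{-\gamma T/2}$ and the $1 + \tau_{\max}/T$ factors, after using that on $[0,\tau_{\max})$ the process has not yet "seen" any delayed feedback and $G$ is nonincreasing in expectation there (or a crude bound suffices). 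Assembling these gives precisely the claimed inequality.

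\textbf{The main obstacle} I expect is the bookkeeping in the second step: correctly accounting for the overlapping delay windows when bounding $\esp{\eps_k(t)\NRM{X_\ell(t)-X_\ell(t-\tau_k)}\cdot(\text{something})}$, and in particular extracting the sharp constant in~\eqref{eq:K_k} — the appearance of the asymmetric pair $\tau_k M_{k,\ell} + e\tau_\ell M_{\ell,k}$ and the factor $e = \exp(1)$ strongly suggests that a term of the form $\sum_{j\ge 0} (j+1)(p_\ell\tau)^j/j!$ or $\sup_x x e^{-x}$-type optimization is being used to control how the delayed gradient at time $t-\tau_k$ depends recursively on even earlier states. Getting this recursion to close with the stated constant, while simultaneously respecting the measurability structure of the $\eps_k(t)$, is the delicate heart of the argument; everything else is standard Lyapunov-for-Poisson-jump-process manipulation combined with the Grönwall-type integration in the third step.
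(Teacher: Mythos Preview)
Your three-step skeleton is the paper's, but the decomposition in your second step goes the wrong way and this is where the argument would break. You anchor at the current gradient, writing $\nabla_k G(X(t-\tau_k))=\nabla_k G(X(t))-(\text{difference})$, so that the leading negative drift term is $-\eps_k(t)\eta_k\NRM{\nabla_k G(X(t))}^2$. The paper anchors at the \emph{delayed} gradient instead: $-\langle g_{k,t-\tau_k},g_{k,t}\rangle=-\NRM{g_{k,t-\tau_k}}^2-\langle g_{k,t-\tau_k},g_{k,t}-g_{k,t-\tau_k}\rangle$. This is not cosmetic. The measurability hypothesis makes $\eps_k(t)$ a function of $\cP_\ell\cap[t-\tau_k,t)$ only, hence \emph{independent} of $X(t-\tau_k)$, so that $\esp{\eps_k(t)\NRM{g_{k,t-\tau_k}}^2}=\esp{\eps_k(t)}\,\esp{\NRM{g_{k,t-\tau_k}}^2}\ge\eps_k\,\esp{\NRM{g_{k,t-\tau_k}}^2}$. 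With your anchoring, $\eps_k(t)$ and $g_{k,t}=\nabla_k G(X(t))$ both depend on the window $[t-\tau_k,t)$ and you cannot extract the factor $\eps_k$ in the favorable direction (negative correlation only gives upper bounds on products with $N_\ell$, not lower bounds on products with $\NRM{g_{k,t}}^2$). The paper recovers the current-time gradient only \emph{after} integrating against $e^{\gamma t}$ and shifting $t\mapsto t+\tau_k$ in the resulting integral; strong convexity is applied to $\sum_k\NRM{g_{k,t}}^2$ at that stage, not at the drift level.

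Two smaller corrections. First, the constant $e$ has a much more mundane origin than you guess: there is no recursion or Poisson-moment identity. After the Fubini step on the history term $\int_0^T e^{\gamma t}\int_{(t-\tau_\ell)^+}^t(\cdots)\,\dd s\,\dd t$, the swapped integral carries a factor $e^{\gamma\tau_\ell}$, and the hypothesis $\gamma\le 1/\tau_{\max}$ gives $e^{\gamma\tau_\ell}\le e$; that is all. Second, the final step is not Gr\"onwall on $e^{\gamma t}\esp{G(X(t))-G(x^\star)}$. The paper takes $\cL_T^\gamma:=\int_0^T e^{\gamma t}\esp{G(X(t))-G(x^\star)}\dd t$ as the Lyapunov object and shows it satisfies the \emph{delayed} differential inequality $\frac{\dd\cL_T^\gamma}{\dd T}\le a+\gamma\big(\cL_T^\gamma-\cL_{T-\tau_{\max}}^\gamma\big)$ with $a=G(X(0))-G(x^\star)$. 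A short self-contained lemma (if $h'(t)\le a+b(h(t)-h(t-\tau))$ with $b\tau<1$ then $h(t)\le a(t+\tau)/(1-b\tau)$) yields $\cL_T^\gamma\le a(T+\tau_{\max})/(1-\gamma\tau_{\max})$; dividing by $\int_0^T e^{\gamma t}\dd t$ and using $\gamma T/(e^{\gamma T}-1)\le e^{-\gamma T/2}$ produces all three factors $e^{-\gamma T/2}$, $1+\tau_{\max}/T$, and $(1-\gamma\tau_{\max})^{-1}$ in one stroke, not from a boundary analysis on $[0,\tau_{\max})$.
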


\begin{proof}
We proceed in three steps. The first step consists in upper bounding, for $t\geq0$, the quantity $\frac{\dd\esp{G(X(t))}}{\dd t}$. We then introduce in Step 2 a Lyapunov function  inspired by the Lyapunov-Krasovskii functional \citep{lyapukrasov2009}), and by using the result proved in the first step, we show that it verifies a delayed ordinary differential inequality. The last step then consists in deriving the desired result from this delayed differential inequality.

\subsubsection*{Step 1} To bound $\frac{\dd\esp{G(X(t))}}{\dd t}$, we study infinitesimal increments between $t$ and $t+\dd t$ for $\dd t\to 0$. This approach is justified by results on  \emph{stochastic ordinary differential equation with Poisson jumps}, see \cite{jumpsDavis}. For $t\geq 0$, let $\cF_t$ be the filtration induced by $\cP_k\cap[0,t),\,k\in[m]$ \emph{i.e.}, the filtration up to time $t$.
By convention, for non-positive $t$, we write $X(t)=X(0)$.
The following inequalities are written up to $o(\dd t)$ terms, that we omit to lighten notations.
Finally, we write \[g_{k,t}=\nabla_kG(X(t))\,,\quad k\in[m],t\geq0\,.\]
We have, using local smoothness properties of $G$ and the fact that for a \emph{P.p.p.} $\cP$ of intensity $p$, $\P(\cP\cap[t,t+\dd t]=\emptyset)=1-p\dd t+o(\dd t)$ and $\P(\#\cP\cap[t,t+\dd t]=1)=p\dd t+o(\dd t)$:
\begin{equation*}
    \begin{split}
        &\frac{\esp{G(X(t+\dd t))-G(X(t))|\F_t}}{\dd t}\\
        &=\sum_{k=1}^m  p_k    \left( G \left( X(t) -  \frac{\eps_k(t)K_k}{p_k L_k}g_{k,t-\tau_k} \right) - G(X(t)) \right) \\
        &\le\sum_{k=1}^m p_k\left ( - \frac{K_k}{p_k L_k} \langle \eps_k(t)g_{k,t-\tau_k},g_{k,t}\rangle\right.\\
        &+ \left.\frac{L_k}{2} \NRM{\eps_k(t)\frac{K_k}{p_k L_k}\nabla_k g_{k,t-\tau_k}}^2\right)\,.
    \end{split}
\end{equation*}
First, we rewrite $-\frac{\eps_k(t)K_k}{p_k L_k} \langle g_{k,t-\tau_k},g_{k,t}\rangle$ as  $$
-\frac{\eps_k(t)K_k}{p_k L_k} \NRM{g_{k,t-\tau_k}}^2-\frac{\eps_k(t)K_k}{p_k L_k} \langle g_{k,t-\tau_k},g_{k,t}-g_{k,t-\tau_k}\rangle,
$$
and bound the second term there by
\begin{equation*}
    \begin{split}
        &- \frac{\eps_k(t)K_k}{p_k L_k} \langle g_{k,t-\tau_k}, g_{k,t} - g_{k,t-\tau_k}\rangle\\
        &\leq  \frac{\eps_k(t)K_k}{p_k L_k}  \NRM{g_{k,t-\tau_k}}  \; \NRM{g_{k,t} - g_{k,t-\tau_k}}\\
        &\leq  \frac{K_k}{p_k L_k}  \NRM{\eps_k(t)g_{k,t-\tau_k}}   \sum_{\ell\sim k} M_{k,\ell} \NRM{X_\ell(t) - X_\ell(t - \tau_k)}\,,
    \end{split}
\end{equation*}
where we used the Cauchy-Schwarz inequality and then local Lipschitz property~\eqref{eq:smooth_dcdm_loc} of $\nabla_k G$. Writing
\begin{align*}
    &\NRM{X_\ell(t)-X_\ell(t-\tau_k)}\\
    &=\NRM{\int_{(t-\tau_k)^+}^t \frac{\eps_\ell(s)K_\ell}{p_\ell L_\ell}g_{\ell,s-\tau_\ell}N_\ell(\dd s)}\,,
\end{align*}
where $N_\ell$ is the Poisson point measure associated to $\cP_\ell$, we have (where we use a triangle inequality for integrals):
\begin{align*}
    & \frac{K_kM_{k,\ell}}{p_k L_k}  \esp{\NRM{\eps_k(t)g_{k,t-\tau_k}}  \NRM{X_\ell(t) - X_\ell(t  -  \tau_k)}}\\ 
    &\leq \esp{  \int_{(t-\tau_k)^+}^t             M_{k,\ell}\frac{\eps_k(t)K_k\eps_\ell(s)K_\ell}{L_kp_kp_\ell L_\ell}  \NRM{g_{k,t-\tau_k}}\NRM{g_{\ell,s-\tau_\ell}}N_\ell(\dd s)}\\
    &\leq  \E\left[  \int_{(t-\tau_k)^+}^t\frac{1}{2}\left( \frac{K_k^2M_{k,\ell}}{p_k^2L_k\sqrt{L_kL_\ell}} \NRM{\eps_k(t)g_{k,t-\tau_k}}^2\right.\right.\\
    &\quad\quad\left.\left.+ \frac{K_\ell^2M_{k,\ell}}{p_\ell^2 L_\ell\sqrt{L_kL_\ell}}\NRM{\eps_\ell(s)g_{\ell,s-\tau_\ell}}^2\right)N_\ell(\dd s)\right]\,.
\end{align*}
For the first term, since both $\eps_k(t)$ and $N_\ell(\dd s)$ for $s$ in the integral are independent from $X(t-\tau_k)$ (and thus from $g_{k,t-\tau_k}$), and where we write $N_\ell(u,v)$ the number of clock tickings of $\cP_\ell$ in the interval $[u,v)$, we obtain:
\begin{equation*}
    \begin{aligned}
        &\esp{\int_{(t-\tau_k)^+}^t\frac{1}{2} \frac{K_k^2M_{k,\ell}}{p_k^2L_k\sqrt{L_kL_\ell}} \NRM{\eps_k(t)g_{k,t-\tau_k}}^2}\\
        &=\frac{\esp{ N_\ell(t-\tau_k,t)\eps_k(t)}}{2}\frac{K_k^2M_{k,\ell}}{p_k^2L_k\sqrt{L_kL_\ell}} \esp{\NRM{g_{k,t-\tau_k}}^2}\,.
    \end{aligned}
\end{equation*}
Furthermore, using our negative correlation assumption, $\esp{N_\ell(t-\tau_k,t)\eps_k(t)}\leq \esp{N_\ell(t-\tau_k,t)}\esp{\eps_k(t)}=p_\ell\tau_k\esp{\eps_k(t)}$, and since $\eps_k(t)$ and $g_{k,t-\tau_k}$ are independent, $\esp{\eps_k(t)}\esp{\NRM{g_{k,t-\tau_k}}^2}=\esp{\eps_k(t)\NRM{g_{k,t-\tau_k}}^2}$.

For the second term, since the process $(\eps_\ell(s)g_{\ell,s-\tau_\ell})_s$ is predictable (in the sense that it is independent from $N_u(\dd s)$ for all $u$), we have
\begin{equation*}
    \begin{aligned}
        &\esp{\int_{(t-\tau_k)^+}^t\frac{K_\ell^2M_{k,\ell}}{2p_\ell^2 L_\ell\sqrt{L_kL_\ell}}\NRM{\eps_\ell(s)g_{\ell,s-\tau_\ell}}^2N_\ell(\dd s)}\\
        &=\int_{(t-\tau_k)^+}^t\frac{K_\ell^2M_{k,\ell}}{2p_\ell^2 L_\ell\sqrt{L_kL_\ell}}\esp{\NRM{\eps_\ell(s)g_{\ell,s-\tau_\ell}}^2}\esp{N_\ell(\dd s)}\\
        &=\int_{(t-\tau_k)^+}^t\frac{K_\ell^2M_{k,\ell}}{2p_\ell^2 L_\ell\sqrt{L_kL_\ell}}\esp{\NRM{\eps_\ell(s)g_{\ell,s-\tau_\ell}}^2}p_\ell \dd s\,.
    \end{aligned}
\end{equation*}
Hence,
\begin{equation*}
    \begin{aligned}
        & \frac{K_kM_{k,\ell}}{p_k L_k}  \esp{\NRM{\eps_k(t)g_{k,t-\tau_k}}  \NRM{X_\ell(t) - X_\ell(t  -  \tau_k)}}\\ 
        &\le \frac{p_\ell\tau_kK_k^2M_{k,\ell}}{2p_k^2L_k\sqrt{L_kL_\ell}} \esp{\NRM{\eps_k(t)g_{k,t-\tau_k}}^2}\\
        &\quad+\int_{(t-\tau_k)^+}^t\frac{K_\ell^2M_{k,\ell}}{2p_\ell^2 L_\ell\sqrt{L_kL_\ell}}\esp{\NRM{\eps_\ell(s)g_{\ell,s-\tau_\ell}}^2}p_\ell \dd s\,.
    \end{aligned}
\end{equation*}
Combining all our elements and taking $\dd t\to 0$, we hence have:
\begin{equation}\label{eq:bound_dG_dt}
    \begin{split}
        &\frac{\dd\esp{G(X(t))}}{\dd t}  \leq -\sum_{k=1}^m\frac{K_k}{L_k}\big(1-\frac{K_k}{2p_k}\big)\esp{\NRM{\eps_k(t)g_{k,t-\tau_k}}^2}\\
        &\quad + \sum_{k=1}^m\sum_{\ell\sim k}\frac{p_\ell\tau_kK_k^2M_{k,\ell}}{2p_kL_k\sqrt{L_kL_\ell}} \esp{\NRM{\eps_k(t)g_{k,t-\tau_k}}^2}\\
        &\quad + \sum_{k=1}^m\sum_{\ell\sim k}\int_{(t-\tau_k)^+}^t\frac{p_k K_\ell^2M_{k,\ell}}{2p_\ell L_\ell\sqrt{L_kL_\ell}}\esp{\NRM{\eps_\ell(s)g_{\ell,s-\tau_\ell}}^2}\dd s\,.
    \end{split}
\end{equation}
\subsubsection*{Step 2}
Now, introduce the following Lyapunov function:
\begin{equation*}
    \cL^\gamma_T=\int_0^Te^{\gamma t} \esp{G(X(t))-G(x^\star)}\dd t\,,
\end{equation*}
that we wish to upper-bound by some constant, where $\gamma$ is as in~\eqref{eq:gamma_cgd}. We have:
\begin{equation*}
    \frac{\dd \cL^\gamma_T}{\dd T}= G(X(0))-G(x^\star)+\gamma \cL^\gamma_T + \int_0^T e^{\gamma t}\frac{\dd\esp{G(X(t))}}{\dd t} \dd t\,.
\end{equation*}
Integrating the bound~\eqref{eq:bound_dG_dt} on $\frac{\dd\esp{G(X(t))}}{\dd t}$, we obtain, using $\int_0^T\int_{(t-\tau)^+}^t h(u)\dd u \dd t \leq \tau\int_0^Th(t)\dd t$ for non-negative $h$:
\begin{equation*}
\begin{split}
    \frac{\dd \cL^\gamma_T}{\dd T}&\leq G(X(0))-G(x^\star)+\gamma \cL^\gamma_T\\
    & -\sum_{k=1}^m\frac{K_k}{L_k}\big(1-\frac{K_k}{2p_k}\big)\int_0^{T}e^{\gamma t}\esp{\NRM{\eps_k(t)g_{k,t-\tau_k}}^2}\dd t\\
    &+\sum_{k=1}^m A_k \int_0^{T}e^{\gamma t}\esp{\NRM{\eps_k(t)g_{k,t-\tau_k}}^2}\dd t\,,
\end{split}
\end{equation*}
where 
\[
A_k=\frac{K_k^2}{2p_kL_k}\sum_{\ell\sim k}\frac{p_\ell\tau_kM_{k,\ell}}{\sqrt{L_kL_\ell}}+e^{\gamma\tau_\ell}\frac{p_\ell\tau_\ell M_{\ell,k}}{\sqrt{L_kL_\ell}}\,.
\] 
Remark now that we have 
\begin{equation}\label{eq:tmp}
\frac{K_k^2}{2p_kL_k}+A_k\leq \frac{K_k}{2L_k},\quad k\in[m].
\end{equation}
Indeed,~\eqref{eq:tmp} is equivalent to
\begin{equation*}
    K_k\leq \frac{p_k}{1+\sum_{\ell\sim k}\Big(\frac{p_\ell\tau_kM_{k,\ell}}{\sqrt{L_kL_\ell}}+e^{\gamma\tau_\ell}\frac{p_\ell\tau_\ell M_{\ell,k}}{\sqrt{L_kL_\ell}}\Big)}\,,
\end{equation*}
which follows from the assumed bounds \eqref{eq:K_k} on $K_k$ and the fact that $\gamma\le 1/\tau_{\max}$, assumed in \eqref{eq:gamma_cgd}. 
we then have, using~\eqref{eq:tmp} and the fact that, by strong convexity, $G(X(t))-G(x^\star)\leq \frac{1}{2\sigma}\NRM{\nabla G(X(t))}^2=\frac{1}{2\sigma}\sum_{k=1}^m\NRM{\nabla g_{k,t}}^2$:
\begin{equation*}
    \begin{split}
        \frac{\dd \cL^\gamma_T}{\dd T}&\leq G(X(0))-G(x^\star)+\gamma \cL^\gamma_T\\
        &-\sum_{k=1}^m \frac{K_k}{2L_k}\int_0^{T-\tau_k}e^{\gamma (t+\tau_k)}\esp{\NRM{\eps_k(t+\tau_k)g_{k,t}}^2} \dd t\\
        &\leq G(X(0))-G(x^\star)+\gamma \cL^\gamma_T\\
        &-\min_{ k\in[ m]}   \big(\frac{K_k\eps_ke^{\gamma\tau_k}}{2L_k} \big)  \int_0^{T-\tau_{\max}}e^{\gamma t}\esp{\sum_{k=1}^m\NRM{g_{k,t}}^2} \dd t\\
        &\leq G(X(0)) - G(x^\star ) + \gamma \big(\cL^\gamma_T -\cL^\gamma_{T-\tau_{\max}}\big)\,,
    \end{split}
\end{equation*}
where we used the assumption~\eqref{eq:gamma_cgd} that  $\gamma\leq\sigma\min_{k\in[m]} \big(\frac{K_k\eps_k}{L_k} \big) $.
\subsubsection*{Step 3}
The proof is then concluded by using the following lemma, to control solutions of this delayed ordinary differential inequality.
\begin{lemma}\label{lemma:ineq_diff}
Let $h:\R\to \R^+$ a differentiable function such that:
\begin{align*}
    &\forall t\le 0\,,\, h(t)=0\,,\\
    &\forall t\ge 0\,,\, h'(t)\le a+b(h(t)-h(t-\tau))\,,
\end{align*}
for some positive constants $a,b,\tau$ verifying $\tau b<1$. Then:
\begin{equation*}
    \forall t\in \R\,,\quad h(t)\le \frac{a(t+\tau)}{1-\tau b}\,.
\end{equation*}
\end{lemma}
\begin{proof}
Let $\delta(t)=h(t)-h(t-\tau)$. For any $t\ge 0$, we have:
\begin{align*}
    \delta(t)&=\int_{t-\tau}^t h'(s)\dd s\\
    &\le \int_{t-\tau}^t (a+b\delta(s))\dd s\\
    &\le \tau(a+b\sup_{s\le t}\delta(s)).
\end{align*}
Let $c=\frac{\tau a}{1-\tau b}$ (solution of $x=\tau(a+bx)$) and $t_0=\inf\{t>0|\delta(t)\ge c\}\in \R\cup\{\infty\}$. Assume that $t_0$ is finite. Then, by continuity, $\delta(t_0)=c$ and:
\begin{align*}
    c\le \tau(a+b \sup_{s\le t_0}\delta(s)) < \tau(a+b c) <c,
\end{align*}
as for all $s<t_0$, $\delta(s)<c$. This is absurd, and thus $t_0$ is not finite: $\forall t>0, \delta(t)<c$, giving us $h(t)\le c(t+\tau)/\tau$ for all $t\ge 0$.
\end{proof}
To conclude the proof of Theorem~\eqref{thm:dcdm}, we apply Lemma~\ref{lemma:ineq_diff} to $h(T)=\cL_T^\gamma$ with  $a=G(X(0))-G(x^\star)$, $b=\gamma$ and $\tau=\tau_{\max}$ to obtain that for all $T>0$,
\begin{equation*}
    \cL_T^\gamma \leq \big(G(X(0))-G(x^\star)\big)\frac{T+\tau_{\max}}{1-\tau_{\max}\gamma}\,
    \cdot
\end{equation*}
The result of Theorem~\ref{thm:dcdm} follows by dividing this inequality by $\int_0^T e^{\gamma t}\dd t = \frac{e^{\gamma T}-1}{\gamma}$:
\begin{equation*}
    \begin{aligned}
        \frac{\int_0^T e^{\gamma t} \esp{G(X(t))-G(x^\star)}\dd t}{\int_0^Te^{\gamma t} \big(G(X(0))-G(x^\star)\big)\dd t }&\leq \frac{\gamma}{e^{\gamma T}-1}\frac{T+\tau_{\max}}{1-\tau_{\max}\gamma}\\
        &=\frac{\gamma T}{e^{\gamma T}-1}\frac{1+\tau_{\max}/T}{1-\tau_{\max}\gamma}\\
        &\leq e^{-\gamma T/2} \frac{1+\tau_{\max}/T}{1-\tau_{\max}\gamma}\,,
    \end{aligned}
\end{equation*}
where we used that for $x\ge 0$,  $\frac{e^x-1}{x}\geq e^{x/2}$.
\end{proof}

\section{Extension to decentralized optimization\label{sec:optim}}

Using Theorem~\ref{thm:dcdm}, we are now armed to generalize the delayed randomized gossip algorithm and analysis to more general settings.
In this section we extend our results to decentralized optimization, going beyond the quadratic objective functions considered network averaging.

\subsection{Delayed Decentralized Optimization}

Consider the decentralized optimization problem ~\eqref{eq:main_pbm}. We make the following assumptions on the individual objective functions $f_i$ therein :
\begin{equation}\label{eq:ass_reg}
\hbox{Each $f_i$, $i\in V$, is $\sigma$-strongly convex and $L$-smooth,}
\end{equation} 
see \cite{bubeck2014convex} for definitions. 
Let $f(z):=\sum_{i\in [n]} f_i(z)$ for $z\in \R^d$ and  $F(x)=\sum_{i\in [n]} f_i(x_i)$ for $x=(x_1,\cdots,x_n)\in \R^{n\times d}$ where $x_i\in \R^d$ corresponds to node $i\in[n]$. 
\begin{defn}[\textbf{Fenchel Conjugate}]
For any function $g:\R^p\to\R$, its \emph{Fenchel conjugate} is denoted by $g^*$ and defined on $\R^p$ by $ g^*(y)=\sup_{x\in \R^p}\langle x,y\rangle -g(x)\in \R\cup \{+\infty\}$.
\end{defn}

Our algorithm for delayed decentralized optimization is built on delayed randomized gossip for  network averaging, augmented with local computations. Each node $i\in V$ keeps two local variables: the communication variable $x_i(t)$, used to run delayed randomized gossip, and a computation variable $y_i(t)$, used to make local computation updates in the following way.

\textbf{Local computations.} Each node $i$ generates a \emph{Poisson point process}~$\cP_i^{\comp}=\set{T_1^{\comp}(i)<T_2^{\comp}(i),\ldots}$ of intensity~$p_i^{\comp}$. At the clock tickings $T_k^{\comp}(i)$, a local computation update is made corresponding to a computation started at a time $T_k^{\comp}(i)-\tau_i^{\comp}$, where $\tau_i^{\comp}$ is the upper bound on the time to perform an elementary computation at node $i$, introduced in  Assumption~\ref{hyp:delays}. Thus by assumption the computation started at time $T_k^{\comp}(i)-\tau_i^{\comp}$ is completed by time $T_k^{\comp}(i)$ so that the update can be performed at that time. The precise form of this update is given 
by Equation~\eqref{eq:comp_updates}.

\textbf{Communications.} In parallel of these local computations, a \emph{Delayed Randomized Gossip} is run on the graph. Dedicated \emph{P.p.p.} $(\cP_{ij})_{(ij)\in E}$ with respective intensities $(p_{ij})_{(ij)\in E}$ are associated to communication updates of all network edges, and used to perform updates as prescribed by Equation~\eqref{eq:comm_updates} in \emph{Delayed Randomized Gossip}.

The resulting Delayed Decentralized Optimization algorithm, or \emph{DDO} for short, is described in Algorithm~\ref{algo:DDO} and is a combination of Algorithm~\ref{algo:drg} for communication updates along edges $(ij)\in E$ with  Algorithm~\ref{algo:comp} for local computation updates at nodes~$i\in V$.

\subsection{Convergence guarantees}

The process $(x(t),y(t))\in\R^{2n\times d}$ defined by algorithm \emph{DDO}, Algorithm~\ref{algo:DDO},
satisfies the following convergence guarantees that generalize Theorem~\ref{thm:conv_drg} to  decentralized optimization beyond the case of quadratic functions. 

\begin{thm}[\textbf{Delayed Decentralized Optimization}]\label{thm:conv_gen}Under the regularity assumptions~\eqref{eq:ass_reg}, assume further that for all $1\leq i\in V$ and $(ij)\in E$, we have:
\begin{equation}\label{eq:hyp_DDO}
\begin{aligned}
    &K_{ij}\leq \frac{p_{ij}}{1+\sum_{(kl)\sim (ij)}p_{kl} \big(\tau_{ij}+e\tau_{kl}\big)}\,\\
    &K_i^{\comp}\leq \frac{p_i^{\comp}}{1+\sum_{j\sim i}  p_{ij}\big(\tau_i^{\comp}  + e\tau_{ij}\big)}\,.
\end{aligned}
\end{equation}
Let $\tau_{\max}:=\max\left(\max_{(ij)\in E}\tau_{ij},\max_{i\in V}\tau_i^{\comp}\right).$
Then for $\gamma>0$ such that
\begin{equation}\label{eq:gamma_ddo}
    \gamma\leq \min\left(\frac{\sigma}{4L}\lambda_2\big(\Delta_G(K)\big)\,,\,\frac{1}{\tau_{\max}}\right)\,,
\end{equation}
the process $(x(t),y(t))$ generated by \emph{DDO} satisfy
\begin{equation}\label{eq:conv_DDO}
    \frac{\int_0^T e^{\gamma t} \esp{\NRM{\frac{\sigma}{2}x(t)-\Bar{x}^{\star}}^2}\dd t}{\int_0^Te^{\gamma t}\NRM{\frac{\sigma}{2}x(0)-\Bar{x}^{\star}}^2 \dd t} \leq e^{-\frac{\gamma T}{2}} \frac{L}{\sigma} \frac{1+\frac{\tau_{\max}}{T}}{1-\gamma \tau_{\max}}\,,
\end{equation}
where $\Bar{x}^\star=(x^\star,\ldots,x^\star)^\top\in\R^{n\times d}$ for $x^\star$ minimizer of $f=\sum_i f_i$.
\vspace{5pt}
\end{thm}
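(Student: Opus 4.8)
The strategy mirrors the proof of Theorem~\ref{thm:conv_drg}: reduce to the general delayed coordinate gradient descent bound, Theorem~\ref{thm:dcdm}, but now on an \emph{augmented graph} that turns both communication and local computation into coordinate-descent blocks. The plan is: (1) build the augmented graph and the associated strongly convex function $G$ through a Fenchel-dual reformulation of~\eqref{eq:main_pbm}; (2) check that continuized delayed coordinate gradient descent on $G$ \emph{is} algorithm DDO; (3) identify the regularity constants of Theorem~\ref{thm:dcdm} so that~\eqref{eq:K_k} becomes~\eqref{eq:hyp_DDO} and~\eqref{eq:gamma_cgd} becomes~\eqref{eq:gamma_ddo}; (4) translate the resulting control of $G(X(t))-G(x^\star)$ into the primal bound~\eqref{eq:conv_DDO}.

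For (1): let $G^+=(V^+,E^+)$ be obtained from $G$ by attaching to each $i\in V$ a private virtual node $i'$ and the edge $(i,i')$. Real nodes carry a quadratic piece $\tfrac{\sigma}{2}\NRM{\cdot}^2$ and virtual node $i'$ carries (a convex splitting of) $f_i$, so that at consensus over the connected graph $G^+$ the objective equals $\sum_i f_i(z)$; the splitting is chosen so that all conjugates appearing in the dual are smooth. Encoding the consensus constraint on $G^+$ by an incidence-type matrix $A^+$ with $A^+(A^+)^\top=\Delta_{G^+}(\{\mu^2\})$, strong duality turns~\eqref{eq:main_pbm} into $\min_\lambda G(\lambda)$ over edge variables $\lambda\in\R^{E^+\times d}$, where $G$ is a sum of quadratic terms on communication edges and of terms built from the $f_i^*$ on computation edges. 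The key fact is that $f_i$ being $\sigma$-strongly convex and $L$-smooth makes $f_i^*$ be $\tfrac1\sigma$-smooth and $\tfrac1L$-strongly convex, so $G$ is strongly convex on ${\rm Im}((A^+)^\top)$, exactly as in the proof of Theorem~\ref{thm:conv_drg}. Reading $x(t),y(t)$ off $\lambda_t$ through $A^+$ — the analogue of $x_t=\bar x+A\lambda_t$ in the gossip case — one checks that the continuized delayed coordinate gradient descent on $G$, with no capacity constraints ($\eps_k\equiv1$), Poisson intensities $p_{ij}$ and $p_i^{\comp}$, and delays $\tau_{ij}$, $\tau_i^{\comp}$, coincides with DDO (Algorithm~\ref{algo:DDO}), with the normalization arranged so that $\tfrac\sigma2 x(t)$ is the natural primal estimate of $x^\star$.

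For (3): the communication blocks are $L_{ij}=2\mu_{ij}^2$-smooth and, taking $\mu_{ij}^2=K_{ij}$ and cross-smoothness $M_{(ij),(kl)}=\sqrt{L_{ij}L_{kl}}$ among them, condition~\eqref{eq:K_k} reduces \emph{verbatim} to the gossip condition~\eqref{eq:K_comm}, i.e. the first line of~\eqref{eq:hyp_DDO}; the computation blocks get smoothness constants from the $f_i^*$ (scaled by the chosen weights $\mu_i^2$) and couple to the incident communication blocks so that~\eqref{eq:K_k} for the computation block yields the second line of~\eqref{eq:hyp_DDO}. The crux is a spectral lemma: eliminating the degree-one virtual nodes and using the $\tfrac1L$-strong convexity of the $f_i^*$, the strong-convexity constant $\sigma_G$ of $G$ on ${\rm Im}((A^+)^\top)$ equals $\tfrac{\sigma}{2L}\lambda_2(\Delta_G(K))$; substituting this, $\eps_k=1$, and $\min_k K_k/L_k=\tfrac12$ into~\eqref{eq:gamma_cgd} gives exactly~\eqref{eq:gamma_ddo}. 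Theorem~\ref{thm:dcdm} then bounds $\int_0^Te^{\gamma t}\E[G(X(t))-G(x^\star)]\,\dd t$ over $\int_0^Te^{\gamma t}(G(X(0))-G(x^\star))\,\dd t$ by $e^{-\gamma T/2}\tfrac{1+\tau_{\max}/T}{1-\gamma\tau_{\max}}$. For (4), one sandwiches: $\tfrac1\sigma$-smoothness of the $f_i^*$ blocks (together with the quadratic part) gives $G(X(0))-G(x^\star)\le c'\,\NRM{\tfrac\sigma2 x(0)-\bar x^\star}^2$, while strong convexity of $G$ gives $G(X(t))-G(x^\star)\ge c\,\NRM{\tfrac\sigma2 x(t)-\bar x^\star}^2$, with $c'/c=L/\sigma$ once the geometry of $A^+$ cancels — this is the condition-number factor $L/\sigma$ in~\eqref{eq:conv_DDO}.

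The hard part is steps (1)--(3): constructing the augmented-graph function so that (a) the continuized coordinate descent on it is \emph{literally} DDO — in particular the local computation update must be a coordinate-gradient step on a genuinely smooth block, which is why the dual splitting must be done carefully; (b) the per-block and cross-smoothness constants make~\eqref{eq:K_k} degenerate into the two clean, fully local conditions~\eqref{eq:hyp_DDO}, the communication one being unchanged from the gossip case; and (c) the strong-convexity constant of $G$ is exactly $\tfrac{\sigma}{2L}\lambda_2(\Delta_G(K))$. The delay bookkeeping and the Lyapunov--Krasovskii / delayed differential-inequality argument are already internal to Theorem~\ref{thm:dcdm}, so once the reduction is set up the remaining work is conjugate calculus and eigenvalue estimates, the decentralized-optimization counterparts of Lemmas~\ref{lem:local_smoothness1}, \ref{lem:local_smoothness2}, \ref{lem:sc}.
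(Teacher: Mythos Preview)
Your proposal is correct and follows essentially the same route as the paper: augmented graph $G^+$ with virtual computation nodes, splitting $f_i$ into a quadratic communication piece plus a strongly convex computation piece, Fenchel-dual reformulation $F_A^*(\lambda)=F^*(A\lambda)$ on which continuized delayed coordinate descent is exactly DDO, then Theorem~\ref{thm:dcdm} with $M_{(ij),(kl)}=\sqrt{L_{ij}L_{kl}}$ and the strong-convexity/eigenvalue estimates of Lemmas~\ref{lem:local_smoothness1}--\ref{lem:sc} together with Lemma~\ref{lem:lap_augmented}. A few constants are stated loosely (the paper splits with $\tfrac{\sigma}{4}\NRM{\cdot}^2$ rather than $\tfrac{\sigma}{2}\NRM{\cdot}^2$, and the dual block smoothness is $L_{ij}\propto \mu_{ij}^2\sigma^{-1}$ rather than $2\mu_{ij}^2$), but these do not affect the argument.
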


DDO is based on a dual formulation and uses an augmented graph representation introduced in~\cite{hendrikx2020dual} to decouple computations from communications, as detailed in the proof. The dual gradient computations in Algorithm~\ref{algo:comp}  can  be expensive in general; they could be avoided by using a primal-dual approach for the computation updates \citep{kovalev2021lower}. 

The convergence guarantees we obtain resemble classical ones: Interpreting $\gamma$ as the reciprocal of the time scale for convergence, we recognize in its upper bound~\eqref{eq:gamma_ddo}   an ‘‘optimization factor''  $\kappa_\comp^{-1}:=\sigma/L$, and a ‘‘communication factor'' $\kappa_\comm^{-1}=\lambda_2\big(\Delta_G(K)\big)$.
Our method is non-accelerated, so the computation factor $\kappa_\comp$, the condition number of the optimization problem, is expected. The communication factor captures the delay heterogeneity in the graph as in \emph{Delayed Randomized Gossip}, leading to the \emph{asynchronous speedup} discussed in Section~\ref{sec:gossip} after Theorem~\ref{thm:conv_drg}.

Previous approaches have considered accelerating decentralized optimization by obtaining $\sqrt{\kappa_\comp}$ instead of $\kappa_\comp$ and/or $\sqrt{\kappa'_\comm}$ instead of $\kappa'_\comm$ for $\kappa'_\comm$ a communication factor in the rate of convergence \citep{scaman2017optimal,kovalev2020optimal,hendrikx2019accelerated}. Our result yields a speedup of a different nature: we obtain a communication factor $\kappa_\comm$ that can be arbitrarily larger than previously considered $\kappa'_\comm$ for networks with huge delay heterogeneity.

\begin{algorithm}[ht]
\caption{Local computations, node $i$}
\label{algo:comp}
\begin{algorithmic}[1]
\STATE Step size $K_{i}^\comp>0$
\vspace{0.5ex}
\STATE Initialization $x_0(i)=y_0(i)=0$
\vspace{0.5ex}
\STATE Initialization $T_1^\comp(i)\sim {\rm Exp}(p_{i}^\comp)$
\FOR{$\ell=1,2,\ldots$}
\vspace{0.5ex}
\STATE $T_{\ell+1}(ij)=T_{\ell}(ij)+{\rm Exp}(p_{ij})$.
\vspace{0.5ex}
\ENDFOR  
\vspace{0.5ex}
\FOR{$\ell=1,2,\ldots$}
\vspace{0.5ex}
\STATE At time $T_{\ell}^\comp(i)-\tau_{i}^\comp$, node $i$ computes $g_i=\nabla \phi_i^*(y_i(T_{\ell}^\comp(i)-\tau_{i}^\comp))$ (takes a time less than $\tau_i^\comp$) and keeps 
$\hat{x}_i=x_i(T_{\ell}^\comp(i)-\tau_i^\comp)$ in memory, where $\phi_i=f_i-\frac{\sigma}{4}\NRM{.}^2$.
\vspace{0.5ex}
\STATE At time $T_{\ell}^\comp(i)$, 
    \begin{equation}\label{eq:comp_updates}
    \begin{aligned}
        &y_{i}(T_{\ell}^\comp(i)) \xleftarrow t  y_{i}(T_{\ell}^\comp(i)-) -  \frac{\sigma K_i^{\comp   }}{p_i^{\comp   }}  \big(g_i-\hat{x}_i\big)\,,\\
        &x_i(T_{\ell}^\comp(i)) \xleftarrow t  x_i(T_{\ell}^\comp(i)-) -  \frac{K_{i}^{\comp}  }{2p_i^{\comp} } \big(\hat{x}_i-g_i\big)\,.
    \end{aligned}
    \end{equation}
\ENDFOR  
\end{algorithmic}
\end{algorithm}
\begin{algorithm}[ht]
\caption{\emph{DDO}}
\label{algo:DDO}
\begin{algorithmic}[1]
\STATE Node initializations $x_0(i)=y_0(i)=0$, $i=1,\ldots,n$
\FOR{$i\in V$ and $(ij)\in E$, asynchronously, in parallel}
\vspace{0.5ex}
\STATE Communication updates along edge $(ij)$ according to Algorithm~\ref{algo:drg}
\vspace{0.5ex}
\STATE Local computation updates at node $i$ according to Algorithm~\ref{algo:comp}
\ENDFOR 
\vspace{0.5ex}
\STATE \textbf{Output:} $\frac{\sigma}{2}x_i(t)$ at time $t$ and node $i$.
\end{algorithmic}
\end{algorithm}

\subsection{Proof of Theorem~\ref{thm:conv_gen}}
\begin{proof}
Following the augmented graph approach of \cite{hendrikx2020dual}, for each ‘‘physical'' node $i\in V$, we associate a ‘‘virtual'' node $i^\comp$, corresponding to the computational unit of node $i$. We then consider the augmented graph $G^+=(V^+,E^+)$, where $V^+=V\cup V^{\comp}$ (for $V^\comp=\set{i^\comp,i\in V}$) and $E^+=E\cup E^\comp$ (for $E^\comp=\set{(ii^\comp), i\in V}$). 

For $i\in V$, function $f_i$ is then split (using $\sigma$-strong convexity) into a sum of two $\sigma/2$-strongly convex functions: $f_i=\phi_i+\phi_{i^{\comp}}$ where $\phi_{i^{\comp}}(x_i)=f_i(x_i)-\frac{\sigma}{4}\NRM{x_i}^2$ and $\phi_i(x_i)=\phi_{\comm}(x_i)=\frac{\sigma}{4}\NRM{x_i}^2$.

The optimization objective \eqref{eq:main_pbm} 
\begin{equation*}
    \min_{x_1=\ldots=x_n}\frac{1}{n}\sum_{i=1}^nf_i(x_i)\,,\quad x=(x_1,\ldots,x_n)\in\R^{V\times d}\,
\end{equation*}
can then be rewritten as 
\begin{equation*}
    \min_{x\in\R^{V^+}} \set{F(x)=\sum_{i\in V}\phi_i(x_i) +      \sum_{i^\comp\in V^\comp}     \phi_{i^\comp}(x_{i^\comp})}\,,
\end{equation*}
under the constraint $x_i=x_j$ for $(ij)\in E^+$. This constraint can then be rewritten as $A^\top x=0$ for $A\in\R^{E^+\times V^+}$ such that for all $(ij)\in E^+$, $Ae_{ij}=\mu_{ij}(e_i-e_j)$, as was done for network averaging, considering the augmented graph instead of the original graph.
Using Lagrangian duality, denoting $F_A^*(\lambda):=F^*(A\lambda)$ for $\lambda \in \R^{E^+\times d}$ where $F^*$ is the Fenchel conjugate of $F$, we have:
\begin{equation*}
        \min_{x \in \R^{V^+\times d},x_i=x_j,(ij)\in E^+} F(x) =\max_{\lambda\in \R^{E \times d}} -F_A^*(\lambda).
\end{equation*}
Thus $F_A^*(\lambda)$ is to be minimized over the dual variable $\lambda \in \R^{E^+\times d}$. 
The rest of the proof is divided in two steps: in the first, we derive the updates of the \emph{DDO} algorithm from coordinate gradient descent steps on dual variables, and in the second step we apply Theorem~\ref{thm:dcdm} to prove rates of convergence for these coordinate gradient descent steps on function $F_A^*$.

The partial derivative of $F^*_A$ with respect to coordinate $(ij)\in E^+$ of $\lambda\in\R^{E^+\times d}$ reads:
\begin{align*}
    \nabla_{ij} F_A^*(\lambda)=\mu_{ij}(\nabla \phi_i^*((A\lambda)_i)- \nabla \phi_j^*((A\lambda)_j))\,.
\end{align*}
Consider then the following step of coordinate gradient descent for $F^*_A$ on coordinate $(i_kj_k)\in E^+$ of $\lambda$, performed when edge $(i_kj_k)$ is activated at iteration $k$ (corresponding to time $t_k$):
\begin{equation}\label{eq:step_lambda}
    \lambda_{t_{k}}=\lambda_{t_k-}-\frac{1}{( 2\sigma^{-1})\mu_{i_kj_k}^2}\nabla_{i_kj_k} F_A^*(\lambda_{t_k-\tau_{i_kj_k}})\,,
\end{equation}
corresponding to an instantiation of delayed coordinate gradient descent in the continuized framework, on function $F_A^*$, for \emph{P.p.p.} of intensities $(p_{ij})$ for $(ij)\in E$ and $p_i^\comp$ for $(ii^\comp)\in E^\comp$.
Denoting $v_t=A\lambda_{t}\in \R^{V^+\times d}$ for $t\geq 0$, we obtain the following formula for updating coordinates $i_k,j_k$ of $v$ when $i_kj_k$ activated, \emph{irrespectively of the choice of $\mu_{ij}$ in matrix $A$}:
\begin{equation}\label{eq:steps_v}
\begin{aligned}
    & v_{t_k,i_k}=v_{t_k-,i_k}-\frac{\nabla \phi_{i_k}^*(v_{t_k-\tau_{i_kj_k},i_k})-\nabla \phi_{j_k}^*(v_{t_k-\tau_{i_kj_k},j_k})}{2 \sigma^{-1}}\,,\\
    & v_{t_k,j_k}=v_{t_k-,j_k}+\frac{\nabla \phi_{i_k}^*(v_{t_k-\tau_{i_kj_k},i_k})-\nabla \phi_{j_k}^*(v_{t_k-\tau_{i_kj_k},j_k})}{2\sigma^{-1}}\,.
    \end{aligned}
\end{equation}
Such updates can be performed locally at nodes $i$ and $j$ after communication between the two nodes (if $(ij)$ is a ‘physical edge'), or locally (if $(ij)$ is ‘virtual edge'). We refer in the sequel to this scheme as the Coordinate Descent Method.
While $\lambda\in\R^{E\times d}$ is a dual variable defined on the edges, $v\in \R^{n\times d}$ is also a dual variable, but defined on the nodes. The {\em primal surrogate} of $v$ is defined as $x=\nabla F^*(v)$ \emph{i.e.}~$x_i=\nabla f_i^*(v_i)$ at node $i$. It can hence be computed with local updates on $v$. The decentralized updates of Algorithm~\ref{algo:DDO} (computational updates in Algorithm~\ref{algo:comp}, communication updates in Algorithm~\ref{algo:drg}) are then direct consequences of Equation~\eqref{eq:steps_v}.

The last step of the proof consists in applying Theorem~\ref{thm:dcdm} in order to obtain Theorem~\ref{thm:conv_gen}. 
The function $F_A^*$ we introduced satisfies the assumptions of Theorem~\ref{thm:dcdm} with coordinate blocks corresponding to edges $E^+$: The regularity assumptions are satisfied with smoothness parameter $L_{ij}=8\mu_{ij}^2\sigma^{-1}$ and local Lipschitz coefficients $M_{(ij),(kl)}=\sqrt{L_{ij}L_{kl}}$ for any $(ij),(kl)\in E^+$, as shown in Lemmas \ref{lem:local_smoothness1} and \ref{lem:local_smoothness2} in the Appendix. $F_A^*$ is moreover $\sigma$-strongly convex\footnote{In fact, it is strongly convex on the orthogonal of $\hbox{Ker}A$, which suffices for us to conclude since the dynamics are restricted to this subspace.} with $\sigma$ derived using Lemmas \ref{lem:sc} and \ref{lem:lap_augmented}, and the weights associated to matrix $A$ are chosen so that $\mu_{ij}^2 =\frac{\eps_{ij}K_{ij}\sigma}{2\mu_{ij}^2}$.

Finally, the output of the algorithm at node $i$ is the primal surrogate of variable $x_i(t)$ (associated to $\phi_i$), which is equal to $\nabla \phi_i(x_i(t))=\frac{\sigma}{2}x_i(t)$.

\end{proof}

\section{Handling communication and computation capacity limits\label{sec:trunc}}

\subsection{Communication and computation capacity constraints}

A given node or edge  in the network may be able to handle only a limited number of communications or computations simultaneously. In Delayed Randomized Gossip and \emph{DDO} algorithms, such constraints could be violated when some P.p.p. generates many points in a short interval. 
We extend our algorithms and resulting convergence guarantees to take into account these additional constraints.

In the continuized framework, this constraint can be enforced  by truncating the \emph{P.p.p.}~that handles activations (Definition~\ref{def:trunc_ppp}). We  formalize communication and capacity constraints in Assumption~\ref{hyp:capacities}, and show that asynchronous speedup is still achieved in this setting in Theorem~\ref{thm:with_truncated}. 

In the previous sections, step size parameters $K_{ij},K_i^\comp$  of the algorithms could be tuned to counterweight the effect of delays for arbitrary intensities $p_{ij}$. With the introduction of capacity constraints we will see that the local optimizers at every node must also bound the intensities $p_{ij},p_i^\comp$ based  on local quantities.
The resulting rate of convergence is the same as in Theorems~\ref{thm:conv_drg} and~\ref{thm:conv_gen}, up to a constant factor of $1/2$.

We formalize communication and computation capacity constraints as follows.

\begin{hyp}[Capacity constraints]\label{hyp:capacities}
For some $q_{ij}$, $q_i^{\comm}$, $q_i^{\comp}\in\N^*\cup\set{\infty}$, $i\in V$ and $(ij)\in E$,
\begin{enumerate}
    \item \emph{Computation Capacity:} Node $i$ can compute only $q_i^{\comp}$ gradients in an interval of time of length~$\tau_i^{\comp}$;
    \item \emph{Communication Capacity, edge-wise limitations:} Only $q_{ij}$ messages can be exchanged simultaneously between adjacent nodes $i\sim j$ in an interval of time of length $\tau_{ij}$;
    \item \emph{Communication Capacity, node-wise limitations:} Node $i$ can only send $q_i^{\comm}$ messages in any interval of time of length $\tau_i^{\comm}=\max_{j\sim i}\tau_{ij}$.
\end{enumerate}
\end{hyp}

Taking into account these constraints in the analysis boils down to replacing \emph{P.p.p.} processes $(\cP_{ij})_{(ij)\in E}$, $(\cP_i^\comp)_{i\in V}$ of intensities $(p_{ij})$, $(p_i^\comp)$ in the \emph{DDO} algorithm, by \emph{truncated Poisson point processes} $(\Tilde \cP_{ij},\Tilde\cP_i^\comp)$ (see Definition~\ref{def:trunc_ppp}). 

More precisely, for every edge $(ij)\in E$ (\emph{resp.} node $i\in V$), let $n_{ij}(t)$ be the number of communications occurring along $(ij)$ between times $t-\tau_{ij}$ and $t$  (\emph{resp.} $n_{i,j}^\comm$ the number of communications node $i$ is involved in between times $t$ and $t-\tau_{ij}$, $n_i^\comp$ the number of computations node $i$ is involved with between times $t$ and $t-\tau_i^\comp$).
Without capacity constraints, these quantities are discrete Poisson random variables (of mean $p_{ij}\tau_{ij}$ for $n_{ij}(t)$, \emph{e.g.}).

\subsection{Convergence guarantees}

As in Section~\ref{sec:optim}, we consider communication and computation update rules as in Algorithm~\ref{algo:DDO} (\emph{DDO} algorithm). In the presence of capacity constraints, a communication alongside edge $(ij)\in E$ at a clock ticking $t\in\cP_{ij}$ occurs and \emph{does not break the communication capacity constraints} if and only if $n_{ij}(t)< q_{ij}$ (for edge-wise limitations), $n_{i,j}^\comm(t)< q_i^\comm$ and $n_{j,i}^\comm(t)< q_j^\comm$ (for node-wise limitations) are satisfied. The realistic implementation of these \emph{truncated Point processes} is discussed in Section~\ref{sec:real}.

Under capacity constraints, we have the following guarantees for our algorithm, defined as in Algorithm~\ref{algo:DDO} (Algorithm~\ref{algo:drg} for communications and Algorithm~\ref{algo:comp} for local computations), where communications and computations that violate the capacity constraints are dropped. 
\begin{thm}\label{thm:with_truncated}
Assume for any $i\in V$ and $(ij)\in E$:
\begin{equation} \label{eq:q}
    \begin{aligned}
        &cp_i^{\comp}\tau_i^{\comp}\leq q_i^{\comp}, \\
        &cp_{ij}\tau_{ij}\leq q_{ij}, \\
        &c\sum_{j\sim i}p_{ij}\tau_i^{\comm}\leq q_i^{\comm}\,,
        \end{aligned}
\end{equation}
where $c=1/(1-\sqrt{\ln(6)/2})$ is a numerical constant.
Then, if the assumptions of Theorem~\ref{thm:conv_gen} described in Equation~\ref{eq:hyp_DDO} additionally hold, for $\gamma$ verifying
\begin{equation*}
    \gamma\leq \min\left(\frac{\sigma}{8L}\lambda_2\big(\Delta_G(\nu_{ij}=K_{ij})\big)\,,\,\frac{1}{\tau_{\max}}\right)\,,
\end{equation*}
we have:
\begin{equation*}
        \frac{\int_0^T e^{\gamma t} \esp{\NRM{\frac{\sigma}{2}x(t)-\Bar{x}^{\star}}^2}\dd t}{\int_0^Te^{\gamma t}\NRM{\frac{\sigma}{2}x(0)-\Bar{x}^{\star}}^2 \dd t} \leq e^{-\frac{\gamma T}{2}} \frac{L}{\sigma} \frac{1+\frac{\tau_{\max}}{T}}{1-\gamma \tau_{\max}}\,.
\end{equation*}
\vspace{5pt}
\end{thm}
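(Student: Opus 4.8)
The plan is to run the proof of Theorem~\ref{thm:conv_gen} almost verbatim, the only change being that the Poisson point processes $(\cP_{ij})$, $(\cP_i^\comp)$ driving the communication and computation updates of \emph{DDO} are replaced by the truncated processes of Definition~\ref{def:trunc_ppp}. Concretely, for each coordinate $k$ of the augmented-graph dual problem used in the proof of Theorem~\ref{thm:conv_gen} (a physical edge $(ij)\in E$ or a virtual edge $(i i^\comp)$), we let $\eps_k(t)$ be the indicator that the update scheduled at the point $t\in\cP_k$ is actually carried out, i.e. that performing it violates none of the capacity constraints of Assumption~\ref{hyp:capacities} attached to $k$: for a communication edge $(ij)$ these are the edge-wise bound $q_{ij}$ and the node-wise bounds $q_i^\comm, q_j^\comm$, and for a computation coordinate $i^\comp$ it is the single bound $q_i^\comp$. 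With this identification the iterates of truncated \emph{DDO} are precisely those of the delayed coordinate gradient descent of Section~\ref{sec:cgd} applied to $F_A^*$ with these $\eps_k(t)$; and since the step-size conditions \eqref{eq:hyp_DDO} are unchanged from Theorem~\ref{thm:conv_gen}, the smoothness and strong-convexity bookkeeping ($L_{ij}=8\mu_{ij}^2/\sigma$, $M_{(ij),(kl)}=\sqrt{L_{ij}L_{kl}}$, $\sigma$-strong convexity of $F_A^*$ on $(\Ker A)^\perp$) of that proof carries over. Thus everything reduces to checking that these $\eps_k(t)$ meet the three hypotheses on the $\eps_k(t)$ required by Theorem~\ref{thm:dcdm}.

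Measurability of $\eps_k(t)$ with respect to $\sigma(\cP_\ell\cap[t-\tau_k,t),\ \ell\in[m])$ is read off Definition~\ref{def:trunc_ppp}: $\eps_k(t)$ is a deterministic, coordinatewise nonincreasing function of the counts $\#(\cP_\ell\cap[t-\tau_k,t))$. The one point to watch is that the node-wise communication window $\tau_i^\comm=\max_{j\sim i}\tau_{ij}$ may exceed the delay $\tau_{ij}$ of an incident edge; matching the truncation windows to the capacity windows of Assumption~\ref{hyp:capacities} while keeping $\eps_k(t)$ measurable with respect to the correct window (which is what is needed so that, in the proof of Theorem~\ref{thm:dcdm}, $\eps_k(t)$ stays independent of $g_{k,t-\tau_k}$) is the most delicate bookkeeping; it is handled by taking the delay parameter fed to Theorem~\ref{thm:dcdm} for each coordinate at least as large as every capacity window relevant to it, which still keeps $\tau_{\max}$ and the conditions \eqref{eq:hyp_DDO} in force.

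The quantitative heart is the lower bound $\esp{\eps_k(t)}\ge\tfrac12$, where conditions \eqref{eq:q} and the constant $c=1/(1-\sqrt{\ln 6/2})$ enter. Absent truncation each count controlling $\eps_k(t)$ is a Poisson variable $N$ of mean $\mu$, with $(\mu,q)$ equal to $(p_{ij}\tau_{ij},q_{ij})$, $(p_i^\comp\tau_i^\comp,q_i^\comp)$, or $(\sum_{j\sim i}p_{ij}\tau_i^\comm,q_i^\comm)$; conditions \eqref{eq:q} say precisely that $c\mu\le q$ in each case, with $q\in\N^*$. A Chernoff bound for the Poisson law gives $\P(N\ge q)\le 1/6$ whenever $q\ge\max(1,c\mu)$, and a union bound over the at most three constraints attached to a coordinate then gives $\P(\eps_k(t)=0)\le 1/2$, i.e. $\esp{\eps_k(t)}\ge\eps_k:=1/2$ (a fortiori for computation coordinates, which carry one constraint). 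Finally the negative-correlation hypothesis \eqref{eq:eps_negcor} follows from Harris' inequality on a product space: $\eps_k(t)$ is a nonincreasing function of the independent counts $(\#(\cP_\ell\cap[t-\tau_k,t)))_\ell$ and $N_\ell(t-\tau_k,t)$ is a nondecreasing function of one of them, so $\esp{\eps_k(t)N_\ell(t-\tau_k,t)}\le\esp{\eps_k(t)}\esp{N_\ell(t-\tau_k,t)}$.

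With the three hypotheses in hand, Theorem~\ref{thm:dcdm} applies with $\eps_k=1/2$ for all $k$, and its admissible-rate condition $\gamma<\sigma\min_k \eps_k K_k/L_k$ is exactly that of the proof of Theorem~\ref{thm:conv_gen} with an extra factor $1/2$; following through the same specialization turns the bound $\frac{\sigma}{4L}\lambda_2(\Delta_G(K))$ on $\gamma$ into $\frac{\sigma}{8L}\lambda_2(\Delta_G(K))$ while leaving the $L/\sigma$ prefactor and the $(1+\tau_{\max}/T)/(1-\gamma\tau_{\max})$ factor untouched, which is the claimed inequality. I expect the main obstacle to be neither the Poisson tail nor the FKG step individually but exactly the window-matching above — making the acceptance indicators both faithfully enforce the constraints of Assumption~\ref{hyp:capacities} and measurable with respect to the short windows demanded by Theorem~\ref{thm:dcdm} — together with pinning down the Poisson tail estimate sharply enough that the explicit constant $c$ comes out as stated.
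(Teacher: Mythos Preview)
Your proposal is correct and follows the same route as the paper: reduce to Theorem~\ref{thm:dcdm} with the indicators $\eps_k(t)$ of the truncation events, then show $\E[\eps_k(t)]\ge 1/2$ via a Poisson tail bound (the paper uses $\P(Z_\mu\ge\mu+x)\le e^{-x^2/(\mu+x)}$, splitting into the cases $q\ge 2$ and $q=1$) together with a union bound over the at most three constraints per coordinate. Your explicit verification of measurability and of the negative-correlation hypothesis via Harris/FKG, and your flagging of the window-matching subtlety, are in fact more careful than the paper's own proof, which simply asserts that Theorem~\ref{thm:dcdm} applies and concentrates on the $\eps_k\ge 1/2$ calculation.
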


The same guarantees as without the capacity constraints thus hold, up to a constant factor $1/2$ in the rate of convergence.
The conditions on the activation intensities \eqref{eq:q} suggest that graph sparsity is beneficial: for $q_i^\comm$ small, $2\sum_{j\sim i}p_{ij}\tau_i^{\comm}\leq q_i^{\comm}$ translates into $p_{ij}$ scaling with the inverse of the edge-degree of $(ij)$, so large degrees thus slow down the convergence. 
The new  conditions~\eqref{eq:q} are easily enforced with the natural choice of intensities $p_{ij}$ (\emph{resp.} $p_i^\comp$) of order $1/\tau_{ij}$ (\emph{resp.} $\tau_i^\comp$).

Taking $q_i^\comm=1$, we recover the behavior of \emph{loss networks} \citep{kelly1991lossnetwork}, where a node cannot concurrently communicate with different neighbors. Gossip on loss networks was previously studied 
in \cite{even2020asynchrony}, to obtain some form of asynchronous speedup. Comparatively, our present algorithms are structurally simpler and their analysis in the continuized framework yields faster convergence speeds. 


\subsection{Proof of Theorem~\ref{thm:with_truncated}}

\begin{proof}
    The algorithm under capacity constraints is obtained by applying coordinate gradient descent in the continuized framework to the same dual problem as in Section~\ref{sec:optim}, but with random variables ‘‘$\eps_k(t)$'' that are not taken constant equal to 1. Here, for $(ij)\in E$ and $t\in\cP_{ij}$, we have
    \begin{equation*}
        \eps_{ij}(t)=1_\set{n_{ij}(t)<q_{ij}\,,\,n_{i}^\comm(t)<q_i^\comm\,,\,n_{j}^\comm(t)<q_j^\comm}\,,
    \end{equation*}
    while for $i\in V$ and $t\in\cP_i^\comp$,
    \begin{equation*}
        \eps_{ii^\comp}(t)=1_\set{n_i^\comp<q_i^\comp}\,.
    \end{equation*}
    We apply Theorem~\ref{thm:dcdm} as in the proof of Theorem~\ref{thm:conv_gen}, leading to the  same stability conditions on the step sizes $K_{ij},K_i^\comp$, while the rate of convergence is multiplied by  a lower bound $\eps$ on all $\esp{\eps_{ij}(t)}$ and $\esp{\eps_{ii^\comp}(t)}$. Let us finally compute such a lower bound $\eps$.
    
    For $(ij)\in E$, $n_{ij}(t)$ is stochastically dominated by $Z_{ij}$ a Poisson random variable of parameter $p_{ij}\tau_{ij}$, while $n_{i}^\comm(t)$ and $n_{j}^\comm(t)$ are respectively dominated by $Z_i$ and $Z_j$, Poisson random variables of parameters $\tau_{ij}\sum_{k\sim i}p_{ki}$ and $\tau_{ij}\sum_{\ell\sim j}p_{\ell j}$, so that:
    \begin{align*}
        \esp{\eps_{ij}(t)}&\geq \P\left(Z_{ij}<q_{ij}\,,\,Z_i<q_i^\comm\,,\,Z_j<q_j^\comm\right)\\
        &\geq 1 - \P(Z_{ij} \geq q_{ij}) - \P(Z_i \geq q_i^\comm ) - P(Z_j \geq q_j^\comm )\,.
    \end{align*}
    We now prove that $\P(Z_{ij}\geq q_{ij}),\P(Z_i\geq q_i^\comm),P(Z_j\geq q_j^\comm)$ are all inferior to $1/6$. For $\P(Z_{ij}\geq q_{ij})$, using that for $Z$ a Poisson variable of parameter $\mu$ and $x\geq 0$,
    \begin{equation*}
        \P(Z_{\mu}\ge \mu+x)\le e^{\frac{-x^2}{\mu+x}}\,,
    \end{equation*}
    we have $\P(Z_{ij}\geq q_{ij})\leq e^{-\frac{(q_{ij}-p_{ij}\tau_{ij})^2}{q_{ij}}}\leq e^{-2(1-1/c)^2}$ if $q_{ij}\geq 2$, and this quantity is equal to $1/6$, by definition of $c$. Then, if $q_{ij}=1$, using $\P(Z_{\mu}\ge 1)=1-e^{-\mu}$, we have that $\P(Z_{ij}\geq q_{ij})\leq p_{ij}\tau_{ij}\leq 1/c\leq 1/6$.
    We proceed in the same way for $\P(Z_i\geq q_i^\comm),P(Z_j\geq q_j^\comm)$. Hence, $\esp{\eps_{ij}(t)}\geq1/2$ under our assumptions on the Poisson intensities. Similarly, we prove that $\esp{\eps_{ii^\comp}(t)}\geq 1/2$, and this concludes the proof. 
    \end{proof}

\section{Braess's Paradox and Experiments\label{sec:braess}}

In this section, we investigate how the local step sizes $K_{ij}$ and Poisson intensities $p_{ij}$ used in Theorems~\ref{thm:conv_drg}, \ref{thm:conv_gen} and~\ref{thm:with_truncated} should be tuned for a fixed choice of communication delays. Consider the line graph with constant delays $\tau_{i,i+1}=\tau$. Add edge $(1,n)$ in order to close the line, with a delay $\tau_{1n}=\tau'$ with arbitrarily large $\tau'$. If the added Poisson intensity $p_{1n}$ satisfies $\tau_{1n}p_{1n}\to\infty$, then according to Theorem~\ref{thm:conv_drg}, we have $K_{12}\to 0$ and $K_{n-1,n}\to0$. Consequently, since $\gamma=\cO(\Delta_G(K)$, we have $\gamma\to0$: the weighted graph becomes close to disconnected. By adding an edge to the graph, the convergence speed of delayed randomized gossip is degraded.

In order to alleviate the phenomenon, we would need to virtually delete the edge, by setting $p_{1n}=0$. 
Figure~\ref{fig:braess_exp} illustrates this phenomenon in the more general setting: one can sparsify the communication graph by solving a regularized optimization problem over the $p_{ij}$ in order to maximize $\lambda_2(\Delta_G(K))$ ($K$ being a function of $p$), leading to both faster consensus and smaller communication complexity (and thus lower energy footprint). 

In road-traffic, removing one or more roads in a road network can speed up the overall traffic flow. This phenomenon, called \emph{Braess's paradox} \citep{easley2010bookbraess}, also arises in loss networks~\citep{kelly1997braess}. In our problem, this translates to removing an edge $(ij)$ with a non-negligible Poisson intensity $p_{ij}$. 
We take $G_1$ a dense Erd\H{o}s-Rényi random graph (Figure~\ref{fig:complete_energy}) of parameters $n=30,p=0.75$. Delays $\tau_{ij}$ are taken equal to 0.01 with probability 0.9, and to 1 with probability 0.1. Initially, intensities are set as $p_{ij}^{(1)}=1/\tau_{ij}$. Maximizing:
\begin{equation*}
    \lambda_2  \left(  \Delta_G\big( \frac{p_{ij}}{1+\sum_{kl\sim ij}p_{kl}(\tau_{ij}+e\tau_{kl})} \big)  \right)  - \omega   \sum_{(ij)\in E}   p_{ij}\tau_{ij}
\end{equation*}
over $(p_{ij})_{ij}$, we obtain intensities $p^{(2)}$ and a graph $G_2$ (Figure~\ref{fig:complete_energy_sparse}), sparser than $G_1$: we delete edges that have a null intensity (\emph{i.e.}~such that $p_{ij}^{(2)}=0$). We then run our delayed gossip algorithm for initialization $x_0$ a Dirac mass ($x_0(i)=I_{i=i_0}$), on $G_1$ (blue curves) and $G_2$, for the choice of $K_{ij}$ as in Theorem~\ref{thm:conv_drg}. The green curve is the synchronous gossip algorithm~\cite{dimakis2010synchgossip} on $G_1$, to illustrate the asynchronous speedup, where  each iteration takes a time $\tau_{\max}=1$. In Figure~\ref{fig:time}, the error to the consensus is measured as a function of the continuous time, while it is measured in terms of number of updates in Figure~\ref{fig:discrete} and in terms of energy (defined as $\sum_{k:T_k<t}\tau_{i_kj_k}$ at time $t$: the energy consumed by a communication is assumed to be proportional to the time the communication took) in Figure~\ref{fig:energy}. 

As expected, in terms of number of updates in the whole graph and energy spent, the sparser graph is more effective: slow and costly edges were deleted. Perhaps more surprising, but supported by our theory (Theorem~\ref{thm:conv_gen}) and the resulting Braess's paradox, this also holds in Figure~\ref{fig:time}:
even though in the same amount of time, less updates are made in the sparser graph $G_2$ than in $G_1$, delayed randomized gossip is still faster on $G_2$ than $G_1$.
Making less updates and deleting some communications make all other communications more efficient. 

We believe that this phenomenon could be exploited for efficient design of large scale networks, beyond the maximization the spectral gap regardless of physical constraints as in~\cite{ying2021exponential} for instance.

\begin{figure*}[t]
\centering

\hfill
\subfigure[Graph $G_1$ (E-R)]{
    \includegraphics[width=0.25\linewidth]{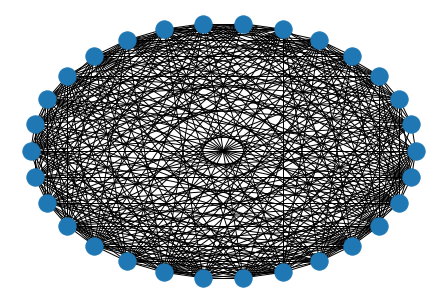}
    \label{fig:complete_energy}
}
\hfill
\subfigure[Graph $G_2$ (after sparsification)]{
    \includegraphics[width=0.25\linewidth]{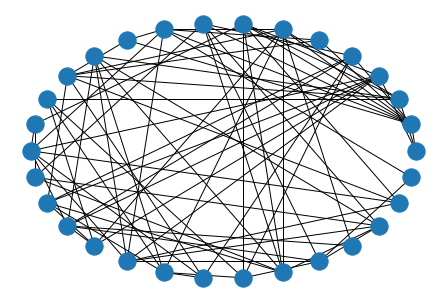}
    \label{fig:complete_energy_sparse}
}
\hfill
\subfigure[Continuous time]{
    \includegraphics[width=0.4\linewidth]{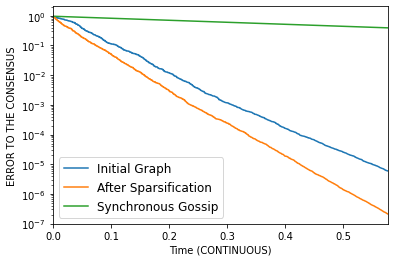}
    \label{fig:time}
}
\hfill

\hfill
\subfigure[Number of updates in the whole graph]{
    \includegraphics[width=0.4\linewidth]{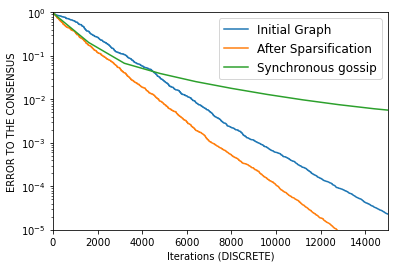}
    \label{fig:discrete}}
\hfill
\subfigure[Energy spent]{
    \includegraphics[width=0.4\linewidth]{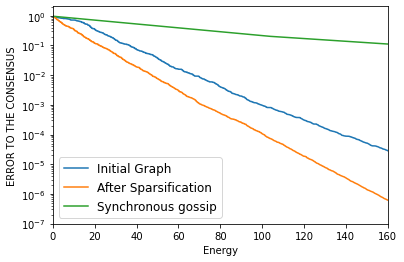}
    \label{fig:energy}}
\hfill
\caption{\textbf{Experiments and Braess's paradox.}\label{fig:braess_exp} 
}
\end{figure*}

\section{Decentralized generation of \emph{truncated Poison point processes}\label{sec:real}}

\subsection{Practical Implementation and Limitations of Algorithm~\ref{algo:DDO} (\emph{DDO)}}

In order for \emph{DDO} (or Delayed Randomized Gossip) to be implemented (with or without \emph{truncated P.p.p.}), some conditions are required, leading to some limitations that we discuss below.
Communication steps at some time $t$ are of the form $-\eta_{ij}(x_i(t-\tau_{ij})-x_j(t-\tau_{ij}))$ (on node $i$): the (delayed) iterates of local variables $x_i$ and $x_j$ for this update are required to have the exact same delay. This comes from our dual formulation.
Secondly, it is not clear who has the initiative of communications in our algorithm, since we treat delays and communications edge-wise. 
However, we propose here a practical implementation of \emph{DDO}, taking into account all these limitations: nodes have the initiative of communications and delayed variables are enforced to have the same constant delays (equal to an upper-bound on these delays).

\subsubsection*{Computation Updates at node $i$} let $\mathcal{N}_i^{\comp}(t)$ be the number of computations launched at node $i$ between times $t$ and $t-\tau_i^{\comp}$. Initialize $\cP_i^{\comp}$ as $\{t_0\}$ where $t_0$ is a random time of exponential law of parameter $p_i^{\comp}$.
At any time $t\in \cP_i^{\comp}$:
\begin{enumerate}
    \item Compute a random time $T$ of exponential law of parameter $p_i^{\comp}$, and add $t+T$ to $\cP_i^{\comp}$;
    \item \emph{Computation:} at time $t$, if $\mathcal{N}_i^{\comp}(t)<q_i^{\comp}$, $i$ computes $\nabla g_{i^{\comp}}^*(y_i^{\comp}(t))$ and saves into memory node-variable $x_i(t)$;
    \item \emph{Update:} at time $t+\tau_i^{\comp}$, the computation is finished, and the computation update \eqref{eq:comp_updates} can be performed.
\end{enumerate}

\subsubsection*{Communication Updates at node $i$} let $\mathcal{N}_i^{\comm}(t)$ and  $\mathcal{N}_{ij}(t)$ for $j\sim i$ respectively be the number of communications started by node $i$ between times $t$ and $t-\tau_i^{\comm}$ and the number of communications started between nodes $i$ and $j$ in $[t-\tau_{ij},t]$. Initialize $\cP_i^{\comm}$ as $\{t_0\}$ where $t_0$ is a random time of exponential law of parameter $\sum_{j\sim i}p_{ij}/2$.
At any time $t\in \cP_i^{\comm}$:
\begin{enumerate}
    \item Compute a random time $T$ of exponential law of parameter $\sum_{j\sim i}p_{ij}/2$, and add $t+T$ to $\cP_i^{\comm}$;
    \item \emph{Local synchronization:} at time $t$, if $\mathcal{N}_i^{\comm}(t)<q_i^{\comm}$, $i$ chooses an adjacent node $j$ with probability $p_{ij}/\sum_{k\sim i}p_{ik}$, and sends a \emph{ping} (smallest message possible). We assume that sending a \emph{ping} takes a time upper-bounded by $ \tau_{ij}^{\ping}$. Upon reception of this \emph{ping}, if $\mathcal{N}_j^{\comm}(t)<q_j^{\comm}$, $j$ returns the same \emph{ping} to $i$. $i$ and $j$ are thus synchronized at time $t+2\tau_{ij}^{\ping}$.
    Until the return of the feedback by node $j$, $i$ assumes in its request list that this communication will happen.
    \item \emph{Communication:} if $\mathcal{N}_{ij}(t+2\tau_{ij}^{\ping})<q_{ij}$, $i$ sends $x_i(t)$ to $j$ and $j$ sends $x_j(t)$ to $i$, while each agent keeps in memory the vector sent. For this to be possible, at a time $s$, node $i$ needs to keep in memory its local values at times between $s-\max_{j\sim i}\tau_{ij}^{\ping}$ and $s$ (a small number of values usually, so that is not too restrictive). 
    \item \emph{Update:} at time $t+2\tau_{ij}^{\ping}+\tau_{ij}$, update \eqref{eq:comm_updates} can thus be performed.
\end{enumerate}
The induced process has the same law as the one studied and analyzed thanks to the classical property that  a P.p.p. with intensity $p$ has its distribution unchanged after translation of all its points by some constant $\tau_{ij}$. The communication/computation scheme above emphasizes the fact that quantities $\tau_{ij},\tau_i^{\comp}$ are upper bounds on the delays of local communication/computations. The delay is here $\tau_{ij}+2\tau_{ij}^{\ping}$ instead of simply $\tau_{ij}$  yet these are of the same order since $\tau_{ij}^{\ping}\le \tau_{ij}$.


\subsection{Discussion of our assumptions}

\subsubsection*{Continuous time clocks}
Implicitly we have assumed that agents in the networks share a continuous-time clock.

We now discuss how critical this assumption is. One may wonder if clock skews, drifts or shifts between agents could lead to unwanted behavior (instability). First, computer clocks are synchronized via Network Time Protocol (NTP) or other more recent and more robust protocols, justifying the assumption of a common shared clock. However, failures in these protocols could lead to slight skews, that need to be addressed. Looking at our algorithm, it in fact appears that, using timestamps, nodes do not need to be perfectly synchronized, as long as time in each local clock passes at the same speed. Shifts in time-synchronization thus do not appear to be the core practical difficulty: clock drifts (clocks that do not have the same frequency and where time thus passes at a different speed) are the remaining issue. However, this can be dealt with simply by augmenting pairwise delays to take that into account.

\subsubsection*{Local upper bounds of the local delays}
Our rates depend on local upper bounds of the local delays, that need to be known in order to tune the coefficients. The drawback is thus that, if a node/edge has an erratic behavior (fast but rarely slow), we only use the upper-bound on the delays, which can lead to slower convergence. However, we believe that this is a drawback of most (if not all) asynchronous algorithms, where delays are dealt with by diminishing step sizes. These step sizes are tuned using upper bounds on the delays. Even if we did not artificially force the delays to be equal to the local upper bound $\tau_{ij}$, we would have to tune $K_{ij}$ based on these upper bounds, resulting in the  same rates of convergence. A natural extension would be to consider whether step sizes can be adapted to the physical delay as in \cite{asynchsgdbeats} for asynchronous SGD, therefore obtaining an asynchronous speedup and guarantees without requiring any knowledge on the delay upper bounds.

\section*{Conclusion} 
We introduced a novel analysis framework  for the study of algorithms in the presence of delays, establishing that an \emph{asynchronous speedup} can be achieved in decentralized optimization. Our results hold for explicit choices of algorithm parameters based on local network characteristics. They derive from the continuous-time analysis and assumptions handled in our continuized framework. The explicit conditions and convergence rates we obtain allow us to further discuss counter-intuitive effects akin to the Braess paradox, such as the possibility to speed up convergence by suppressing communication links.
Although the algorithm requires dual updates, a fully primal algorithm could be obtained by using Bregman gradients~\cite{hendrikx2020dual} or a primal-dual formulation~\cite{kovalev2020optimal}. 

\appendix\label{sec:appendix}
\section{Appendix}

\subsection{Regularity}

For $\sigma$-strongly convex and $L$-smooth functions $f_1,\ldots,f_n$  on $\R^d$ and for $A\in\R^{V\times E}$ such that $Ae_{ij}=\mu_{ij}(e_i-e_j)$ for $(ij)\in E$, define $F_A^*$ a function $\R^{E\times d}\to \R$ as:
\begin{equation*}
    F_A^*(\lambda)=\frac{1}{n}\sum_{i\in V}f_i((A\lambda_i))\,,\quad \lambda\in\R^{E\times d}\,.
\end{equation*}

\begin{lemma}\label{lem:local_smoothness1}
For any $(ij)\in E$, $F_A^*$ is $L_{ij}:=4\mu_{ij}^2\sigma^{-1}$-smooth on $E_{ij}$ the subspace of coordinates $(ij)\in E$. 

\end{lemma}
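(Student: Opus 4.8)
The plan is to reduce the claim to two elementary facts. The first is purely linear-algebraic: since $Ae_{ij}=\mu_{ij}(e_i-e_j)$, perturbing only the coordinate block $\lambda_{ij}\in\R^d$ moves $A\lambda$ in exactly its $i$-th and $j$-th node-components, by $+\mu_{ij}$ and $-\mu_{ij}$ times the perturbation respectively, leaving all other node-components untouched. The second is the standard Fenchel-duality fact that, $f_i$ being $\sigma$-strongly convex, its conjugate $f_i^*$ has a $\sigma^{-1}$-Lipschitz gradient on $\R^d$ (see \cite{bubeck2014convex}). Reading the star in $F_A^*$ as Fenchel conjugation, so that $F_A^*(\lambda)=\frac1n\sum_{k\in V}f_k^*((A\lambda)_k)$, the chain rule gives the partial gradient with respect to the block $E_{ij}$ as
\[
\nabla_{ij}F_A^*(\lambda)=\frac{\mu_{ij}}{n}\big(\nabla f_i^*((A\lambda)_i)-\nabla f_j^*((A\lambda)_j)\big)\in\R^d ,
\]
because only the $i$- and $j$-terms of the sum depend on $\lambda_{ij}$.

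Next I would fix $\lambda$ and a perturbation $h\in E_{ij}$ (that is, $h\in\R^d$ sitting in the $(ij)$-block) and set $\lambda'=\lambda+h$. By the first observation, $(A\lambda')_i=(A\lambda)_i+\mu_{ij}h$ and $(A\lambda')_j=(A\lambda)_j-\mu_{ij}h$, while every other node-component is unchanged; hence in the gradient formula above only the two conjugate-gradient terms move, each evaluated at arguments differing by $\pm\mu_{ij}h$. Applying the triangle inequality and then the $\sigma^{-1}$-Lipschitz bound on each of $\nabla f_i^*$ and $\nabla f_j^*$ yields
\[
\NRM{\nabla_{ij}F_A^*(\lambda')-\nabla_{ij}F_A^*(\lambda)}\;\le\;\frac{|\mu_{ij}|}{n}\cdot\frac{2|\mu_{ij}|}{\sigma}\,\NRM{h}\;=\;\frac{2\mu_{ij}^2}{n\sigma}\,\NRM{h}.
\]
Since $n\ge1$ the right-hand side is at most $4\mu_{ij}^2\sigma^{-1}\NRM{h}$, so $\nabla_{ij}F_A^*$ is $L_{ij}$-Lipschitz with $L_{ij}=4\mu_{ij}^2\sigma^{-1}$; equivalently $F_A^*$ obeys the quadratic upper bound along $E_{ij}$ with that constant, which is the asserted smoothness of $F_A^*$ on $E_{ij}$.

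There is no genuine obstacle here beyond keeping conventions straight: one must use the antisymmetry $\mu_{ij}=-\mu_{ji}$ so that the $i$- and $j$-components of $A\lambda$ shift with opposite signs; one must remember that each block $E_{ij}$ is a full copy of $\R^d$, so $h$ is a vector and $\mu_{ij}$ a scalar acting on it; and — the only conceptually load-bearing point — one must read the star in $F_A^*$ as Fenchel conjugation, since it is the duality between $\sigma$-strong convexity and $\sigma^{-1}$-Lipschitzness of the gradient that makes the smoothness constant scale like $\sigma^{-1}$ rather than like the smoothness constant $L$ of the $f_i$. The constant $4$ is deliberately loose; the computation actually gives $2\mu_{ij}^2\sigma^{-1}$, which is more than enough for the way Lemma~\ref{lem:local_smoothness1} feeds into the proof of Theorem~\ref{thm:conv_gen}.
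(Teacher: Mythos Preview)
Your proof is correct and follows essentially the same approach as the paper: isolate the two node terms affected by a perturbation in the $(ij)$-block, invoke the $\sigma^{-1}$-smoothness of each $f_i^*$, and read off the constant. The only cosmetic difference is that the paper verifies the quadratic upper bound (descent-lemma form) directly while you bound the Lipschitz constant of $\nabla_{ij}F_A^*$; both give the tight value $2\mu_{ij}^2\sigma^{-1}$, so the stated $4\mu_{ij}^2\sigma^{-1}$ is indeed slack, as you observe.
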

\begin{proof}
Let $h_{ij}\in \R^d$ and $\lambda\in\R^{E\times d}$. Using the $\sigma^{-1}$-smoothness of $f_i^*$ and $f_j^*$:
\begin{align*}
    F_A^*(\lambda &+e_{ij}h_{ij}^{\top})-F_A^*(\lambda)= f_i^*((A(\lambda +e_{ij}h_{ij})^{\top})_i)-f_i^*((A\lambda)_i)\\
    &+ f_j^*((A(\lambda +e_{ij}h_{ij}^{\top}))_j)-f_j^*((A\lambda)_j)\\
    &\le \langle \nabla_{ij}F_A^*(\lambda),e_{ij}h_{ij}^{\top}\rangle\\
    &+ \frac{\sigma^{-1}}{2}\NRM{(Ae_{ij}h_{ij}^{\top})_i}^2+\frac{\sigma^{-1}}{2}\NRM{(Ae_{ij}h^{\top}_{ij})_j}^2\,,
\end{align*}
concluding the proof, as $\NRM{(Ae_{ij}h_{ij}^{\top})_i}^2=2\mu_{ij}^2\NRM{h_{ij}}^2$.
\end{proof}

\begin{lemma}\label{lem:local_smoothness2}
For any $(ij)\in E$, any $\lambda,\lambda'\in\R^{E^+\times d}$:
\begin{equation}
    \NRM{\nabla_{ij}F_A^*(\lambda)-\nabla_{ij}F_A^*(\lambda')}\le \sum_{(kl)\sim(ij)}M_{(ij),(kl)}\NRM{\lambda_{kl}-\lambda'_{kl}},
\end{equation}
where $M_{(ij),(kl)}=\sqrt{L_{ij}L_{kl}}$ and $L_{ij}=4\mu_{ij}^2\sigma^{-1},L_{kl}=4\mu_{kl}^2\sigma^{-1}$.
\end{lemma}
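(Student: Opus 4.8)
The plan is to reduce the claim to the $\sigma^{-1}$-smoothness of the Fenchel conjugates $f_i^*$, which holds because each $f_i$ is $\sigma$-strongly convex, combined with two applications of the triangle inequality. First I would use the coordinate-gradient formula already recorded in the proof of Theorem~\ref{thm:conv_gen}: with $Ae_{kl}=\mu_{kl}(e_k-e_l)$ and the convention $\mu_{kl}=-\mu_{lk}$,
\[
\nabla_{ij}F_A^*(\lambda)=\mu_{ij}\big(\nabla f_i^*((A\lambda)_i)-\nabla f_j^*((A\lambda)_j)\big).
\]
Writing $\eta=\lambda-\lambda'$, the triangle inequality gives
\[
\NRM{\nabla_{ij}F_A^*(\lambda)-\nabla_{ij}F_A^*(\lambda')}\le|\mu_{ij}|\Big(\NRM{\nabla f_i^*((A\lambda)_i)-\nabla f_i^*((A\lambda')_i)}+\NRM{\nabla f_j^*((A\lambda)_j)-\nabla f_j^*((A\lambda')_j)}\Big),
\]
and $\sigma^{-1}$-smoothness of $f_i^*$, $f_j^*$ bounds the two terms on the right by $\sigma^{-1}\NRM{(A\eta)_i}$ and $\sigma^{-1}\NRM{(A\eta)_j}$ respectively.

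The second step is to expand $(A\eta)_i=\sum_{l\sim i}\mu_{il}\eta_{il}$, the sum being over the neighbours $l$ of $i$ in the graph, and apply the triangle inequality once more to get $\NRM{(A\eta)_i}\le\sum_{l\sim i}|\mu_{il}|\,\NRM{\eta_{il}}$, and similarly for $j$. Combining, the left-hand side of the lemma is at most
\[
\sigma^{-1}|\mu_{ij}|\Big(\sum_{l\sim i}|\mu_{il}|\,\NRM{\eta_{il}}+\sum_{l\sim j}|\mu_{jl}|\,\NRM{\eta_{jl}}\Big),
\]
which is already a sum over edges neighbouring $(ij)$, since every edge $(il)$ with $l\sim i$ and every edge $(jl)$ with $l\sim j$ shares a node with $(ij)$.

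It remains to identify the coefficients with $M_{(ij),(kl)}=\sqrt{L_{ij}L_{kl}}=4\sigma^{-1}|\mu_{ij}||\mu_{kl}|$. For a neighbour $(kl)\neq(ij)$ the coefficient obtained is $\sigma^{-1}|\mu_{ij}||\mu_{kl}|\le M_{(ij),(kl)}$, so the only delicate point is the edge $(ij)$ itself: it occurs once in the first inner sum (as $l=j$) and once in the second (as $l=i$), so its total coefficient is $2\sigma^{-1}\mu_{ij}^2$, which is $\le 4\sigma^{-1}\mu_{ij}^2=L_{ij}=M_{(ij),(ij)}$. This double counting of the diagonal edge is exactly where the factor $4$ in $L_{ij}=4\mu_{ij}^2\sigma^{-1}$ (rather than $2$) is used, and it also keeps $M_{(ij),(ij)}$ consistent with the smoothness constant of Lemma~\ref{lem:local_smoothness1}. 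There is no genuine analytic obstacle here; the one thing to be careful about is this accounting of the diagonal term, together with the orientation bookkeeping $\mu_{kl}=-\mu_{lk}$ entering the expansion of $(A\eta)_i$.
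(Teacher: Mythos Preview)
Your proof is correct and follows essentially the same approach as the paper: compute the partial gradient via $\nabla_{ij}F_A^*(\lambda)=\mu_{ij}(\nabla f_i^*((A\lambda)_i)-\nabla f_j^*((A\lambda)_j))$, apply the triangle inequality and $\sigma^{-1}$-smoothness of the conjugates, expand $(A\eta)_i=\sum_{l\sim i}\mu_{il}\eta_{il}$, and bound the resulting coefficients by $\sqrt{L_{ij}L_{kl}}$. Your accounting of the double occurrence of the edge $(ij)$ itself is in fact more explicit than the paper's, and your constants are slightly tighter (the paper picks up an extra $\sqrt{2}$ from $\NRM{Ae_{ij}}$ and then overcounts via $\sigma_i^{-1}\le\sqrt{(\sigma_i^{-1}+\sigma_j^{-1})(\sigma_i^{-1}+\sigma_k^{-1})}$ to reach the same $M_{(ij),(kl)}$).
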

\begin{proof} Since $\nabla_{ij}F_A^*(\lambda)=(Ae_{ij})^{\top} ((\nabla g_i^*((A\lambda)_i)-\nabla g_j^*((A\lambda)_j))$, we have:
\begin{align*}
    &\NRM{\nabla_{ij}F_A^*(\lambda)-\nabla_{ij}F_A^*(\lambda')}\\
    &=\big \|(Ae_{ij})^{\top} ((\nabla f_i^*((A\lambda)_i)-\nabla f_i^*((A\lambda')_i)\\
    &\quad-\nabla f_j^*((A\lambda)_j)+\nabla f_j^*((A\lambda')_j))\big\| \\
    &\le \NRM{Ae_{ij}}\big(\NRM{\nabla f_i^*((A\lambda)_i)-\nabla f_i^*((A\lambda')_i)}\\
    &\quad+\NRM{\nabla f_j^*((A\lambda)_j)-\nabla f_j^*((A\lambda')_j)}\big)\\
    &\le \sqrt{2}|\mu_{ij}| \big(\sigma_i^{-1}\NRM{(A\lambda)_i-(A\lambda')_i}+\sigma_j^{-1}\NRM{(A\lambda)_j-(A\lambda')_j}\big)
    \\
    &=\sqrt{2}|\mu_{ij}| \left(\sigma_i^{-1}\NRM{\sum_{k\sim i}\mu_{ik}(\lambda_{ik}-\lambda'_{ik})}\right.\\
    &\quad\left.+\sigma_j^{-1}\NRM{\sum_{l\sim j}\mu_{jl}(\lambda_{jl}-\lambda'_{jl})}\right)\\
    &\le \sqrt{2}|\mu_{ij}| \left(\sigma_i^{-1}\sum_{k\sim i}|\mu_{ik}|\NRM{\lambda_{ik}-\lambda'_{ik}})\right.\\
    &\quad\left.+\sigma_j^{-1}\sum_{l\sim j}|\mu_{jl}|\NRM{\lambda_{jl}-\lambda'_{jl}})\right)\\
    \end{align*}
    \begin{align*}
    &\le \sqrt{2}|\mu_{ij}| \left(\sqrt{\sigma_i^{-1}+\sigma_j^{-1}}\sqrt{\sigma_i^{-1}+\sigma_k^{-1}}\sum_{k\sim i}|\mu_{ik}|\NRM{\lambda_{ik}-\lambda'_{ik}})\right.\\
    &\quad+\left.\sqrt{\sigma_i^{-1}+\sigma_j^{-1}}\sqrt{\sigma_l^{-1}+\sigma_j^{-1}}\sum_{l\sim j}|\mu_{jl}|\NRM{\lambda_{jl}-\lambda'_{jl}})\right)\\
    &\le \sum_{(kl)\sim(ij)} \sqrt{L_{ij}L_{kl}}\NRM{\lambda_{kl}-\lambda'_{kl}},
\end{align*}
where $L_{ij},L_{kl}$ as in Lemma \ref{lem:local_smoothness1}.
\end{proof}

\begin{lemma}[Strong convexity]\label{lem:sc}
The strong convexity parameter $\sigma_A$ of $F_A^*$ on the orthogonal of $\ker(A)$ is lower bounded by $L^{-1}\lambda_2(\Delta_G(\mu_{ij}^2))$, where we recall that $\lambda_2(\Delta_G(\mu_{ij}^2))$ is the graph Laplacian with weights $\mu_{ij}^2$.
\end{lemma}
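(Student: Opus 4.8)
The plan is to read $F_A^*$ as the composition $F_A^*(\lambda)=F^*(A\lambda)$ with $F^*(y)=\sum_{i\in V}f_i^*(y_i)$ the Fenchel conjugate of $F(x)=\sum_{i\in V}f_i(x_i)$, exactly as in the body of the paper; in particular $\nabla F_A^*(\lambda)=A^\top\nabla F^*(A\lambda)$. Since each $f_i$ is convex and $L$-smooth, its conjugate $f_i^*$ is $L^{-1}$-strongly convex, and hence $F^*$ is $L^{-1}$-strongly convex on $\R^{V\times d}$: for all $y,y'$,
\[
F^*(y')\ge F^*(y)+\langle\nabla F^*(y),y'-y\rangle+\tfrac{1}{2L}\NRM{y'-y}^2.
\]
With this in hand, the whole statement reduces to a Poincaré-type spectral estimate for $A$ restricted to the orthogonal of $\ker A$, so the first thing I would do is establish that estimate.

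First I would compute $AA^\top$. From $(Ae_{ij})_k=\mu_{ij}(\one_{k=i}-\one_{k=j})$ one gets that $AA^\top$ (acting columnwise on $\R^{V\times d}$) has diagonal entries $\sum_{j\sim k}\mu_{kj}^2$ and off-diagonal entry $-\mu_{kl}^2$ on each edge $(kl)\in E$, i.e. $AA^\top=\Delta_G(\nu)$ with $\nu_{kl}=\mu_{kl}^2$ in the sense of Definition~\ref{def:Laplacian}. Because $G$ is connected, $\ker\Delta_G(\mu_{ij}^2)=\R\one$ is one-dimensional, so $\lambda_2(\Delta_G(\mu_{ij}^2))$ is the smallest \emph{nonzero} eigenvalue of $AA^\top$, hence also of $A^\top A$. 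Since $A^\top A$ is symmetric positive semidefinite with kernel exactly $\ker A$, this gives
\[
\NRM{Ah}^2=\langle h,A^\top Ah\rangle\ \ge\ \lambda_2\big(\Delta_G(\mu_{ij}^2)\big)\,\NRM{h}^2\qquad\text{for every }h\in(\ker A)^\perp.
\]

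Finally I would combine the two ingredients: for $\lambda,\lambda'\in(\ker A)^\perp$, apply the $L^{-1}$-strong convexity inequality for $F^*$ at the points $y=A\lambda$ and $y'=A\lambda'$, rewrite $\langle\nabla F^*(A\lambda),A(\lambda'-\lambda)\rangle=\langle\nabla F_A^*(\lambda),\lambda'-\lambda\rangle$, and lower-bound $\NRM{A(\lambda'-\lambda)}^2$ via the previous display — which is legitimate because $\lambda'-\lambda\in(\ker A)^\perp$. This yields
\[
F_A^*(\lambda')\ge F_A^*(\lambda)+\langle\nabla F_A^*(\lambda),\lambda'-\lambda\rangle+\tfrac{1}{2}L^{-1}\lambda_2\big(\Delta_G(\mu_{ij}^2)\big)\NRM{\lambda'-\lambda}^2,
\]
i.e. $\sigma_A\ge L^{-1}\lambda_2(\Delta_G(\mu_{ij}^2))$. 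I do not expect a serious obstacle here; the one point that genuinely needs care, rather than being a difficulty, is the restriction to $(\ker A)^\perp$: no strong convexity can hold on all of $\R^{E\times d}$ since $A^\top A$ annihilates $\ker A$, which is precisely why the statement is phrased on the orthogonal complement, and one must make sure the smallest eigenvalue used is the smallest \emph{nonzero} one — this is where connectivity of $G$ (ensuring $\dim\ker\Delta_G(\mu_{ij}^2)=1$) enters.
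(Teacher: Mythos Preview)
Your argument is correct and follows essentially the same route as the paper: use the $L^{-1}$-strong convexity of each $f_i^*$ (equivalently of $F^*$), identify $AA^\top$ with the weighted Laplacian $\Delta_G(\mu_{ij}^2)$, and convert $\NRM{A(\lambda'-\lambda)}^2$ into a bound in terms of $\lambda_2$ on $(\ker A)^\perp$. If anything, your write-up is slightly more careful than the paper's in making explicit why the relevant eigenvalue is the smallest \emph{nonzero} one and where connectivity of $G$ is used.
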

\begin{proof}
Let $\lambda,\lambda'\in \R^{E\times d}$. For $i\in V$, by $L^{-1}$-strong convexity of $f_i^*$:
\begin{align*}
    f_i^*((A\lambda)_i)-f_i^*((A\lambda')_i)&\geq \langle  \nabla f_i^*((A\lambda')_i),(A(\lambda-\lambda'))_i\rangle \\
    &\quad+\frac{1}{2L}\|(A(\lambda-\lambda'))_i\|^2.
\end{align*}
Summing over all $i\in V$ and using $\nabla F_A^*(\lambda') = A^{\top} (\nabla_i f_i^*((A\lambda')_i))_{i\in V}$ leads to:
\begin{align*}
    F_A^*(\lambda)-F_A^*(\lambda')&\geq \langle  \nabla F_A^*(\lambda'),\lambda-\lambda'\rangle  +\frac{1}{2L}\|A(\lambda'-\lambda\|^2\\
    &\geq   \langle  \nabla F_A^*(\lambda'),\lambda - \lambda'\rangle   + \frac{\lambda^+_{\min}(A^TA)}{4} {\|\lambda - \lambda'\|^*}^2  .
\end{align*}
where $\|.\|^*$ is the euclidean norm on the orthogonal of $Ker(A)$. Finally, notice that $AA^\top=\Delta_G(\mu_{ij}^2)$ and has same eigenvalues as $A^\top A$.
\end{proof}

\subsection{The smallest positive eigenvalue of the augmented graph's weighted Laplacian matrix}

Let $G=(V,E)$ be the ‘‘physical'' graph, augmented as $G^+=(V^+,E^+)$, where $V^+=V\cup\set{i^\comp,i\in V}$ and $E^+=E\cup\set{(ii^\comp), i\in V}$ as in Section~\ref{sec:optim}.

\begin{lemma}\label{lem:lap_augmented}
For $\nu^+=(\nu_{ij})_{(ij)\in E^+}$ non negative weights, the smallest positive eigenvalue of the Laplacian of the augmented graph $G^+$ with weights $\nu^+$ satisfies:
\begin{equation*}
    \lambda_2\big(\Delta_{G^+}(\nu^+)\big)\geq \frac{1}{4}\min\left( \lambda_2\big(\Delta_{G}(\nu)\big)\,,\, \min_{i\in V} \nu_{ii^\comp}\right)\,,
\end{equation*}
where $\lambda_2\big(\Delta_{G}(\nu)\big)$ is the smallest eigenvalue of the original graph, with weights $\nu=(\nu_{ij})_{(ij)\in E}$.
\end{lemma}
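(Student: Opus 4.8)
The plan is to run a Courant--Fischer (Rayleigh quotient) argument directly on the weighted Laplacian of $G^+$. Since attaching to each $i\in V$ a single pendant vertex $i^\comp$ preserves connectivity, $\Delta_{G^+}(\nu^+)$ is positive semidefinite with kernel exactly $\R\one$, so
\begin{equation*}
    \lambda_2\big(\Delta_{G^+}(\nu^+)\big)=\min_{\substack{x\in\R^{V^+},\,x\perp\one\\ x\neq0}}\frac{\sum_{(ij)\in E}\nu_{ij}(x_i-x_j)^2+\sum_{i\in V}\nu_{ii^\comp}(x_i-x_{i^\comp})^2}{\NRM{x}^2}\,.
\end{equation*}
The first move is to parametrize $x=(y,z)$ with $y\in\R^V$ the values on the physical nodes and $z\in\R^V$ the values on the virtual nodes ($z_i:=x_{i^\comp}$), so the numerator becomes $\sum_{(ij)\in E}\nu_{ij}(y_i-y_j)^2+\sum_{i\in V}\nu_{ii^\comp}(y_i-z_i)^2$ and the denominator is $\NRM{y}^2+\NRM{z}^2$.

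Next I would bound the two pieces of the numerator separately. Writing $y=\bar y\,\one+y'$ with $y'\perp\one$ in $\R^V$ and $\bar y=\tfrac1n\one^\top y$, the edge differences only see $y'$, so the first sum equals $(y')^\top\Delta_G(\nu)\,y'\ge\lambda_2(\Delta_G(\nu))\,\NRM{y'}^2$ by Courant--Fischer in $\R^V$. For the second sum, bounding $\nu_{ii^\comp}\ge\min_{k}\nu_{kk^\comp}$ termwise gives $\ge(\min_k\nu_{kk^\comp})\,\NRM{y-z}^2$. Hence, setting $m:=\min\big(\lambda_2(\Delta_G(\nu)),\min_k\nu_{kk^\comp}\big)$, the numerator is at least $m\big(\NRM{y'}^2+\NRM{y-z}^2\big)$, and it remains to show $\NRM{y'}^2+\NRM{y-z}^2\ge\tfrac14\big(\NRM{y}^2+\NRM{z}^2\big)$ for every $x=(y,z)\perp\one$.

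This last inequality is the only nonroutine point, because the constraint $\one^\top x=0$ does \emph{not} split into $y\perp\one$ and $z\perp\one$; one must keep track of the common ``mean''. Let $s:=\one^\top y=-\one^\top z$. Then $\NRM{y}^2=\NRM{y'}^2+s^2/n$, and since $\one^\top(y-z)=2s$ one has $\NRM{y-z}^2\ge\NRM{\Proj_{\one}(y-z)}^2=4s^2/n$, so $s^2/n\le\tfrac14\NRM{y-z}^2$. Combining this with $\NRM{z}^2\le2\NRM{y-z}^2+2\NRM{y}^2$ yields $\NRM{y}^2+\NRM{z}^2\le3\NRM{y'}^2+3s^2/n+2\NRM{y-z}^2\le3\big(\NRM{y'}^2+\NRM{y-z}^2\big)$, which is even slightly stronger than the claimed factor $\tfrac14$. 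Putting everything together gives $\lambda_2(\Delta_{G^+}(\nu^+))\ge\tfrac14 m$. The main obstacle is thus precisely this bookkeeping with the global orthogonality constraint: the natural temptation to reduce to $y$ and $z$ being separately mean-zero is false, and the fix is to observe that the shared mean $s$ is itself controlled by $\NRM{y-z}^2$; the rest is Courant--Fischer plus one-line norm inequalities.
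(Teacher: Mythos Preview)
Your proof is correct and follows essentially the same Rayleigh-quotient route as the paper: bound the $G$-Laplacian term by $\lambda_2(\Delta_G(\nu))\|y'\|^2$, the pendant term by $(\min_i\nu_{ii^\comp})\|y-z\|^2$, and then control the denominator by an elementary inequality. The only cosmetic difference is in that last step: the paper centers at the mean of the physical block and reduces to a coordinate-wise scalar inequality $\lambda z^2+m(z-z')^2\ge c(z^2+{z'}^2)$ solved by completing the square, whereas you track the shared mean $s$ explicitly via vector-norm inequalities---incidentally obtaining the slightly sharper constant $1/3$.
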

\begin{proof}
Let $m=\min_{i\in V} \nu_{ii^\comp}$ and $\lambda=\lambda_2\big(\Delta_{G}(\nu)\big)$. For any $X=(x,y)\in\R^{V^+}$, we have:
\begin{align*}
X^\top \Delta_{G^+}(\nu^+) X &= \sum_{(ij)\in E^+}\nu_{ij}(X_i-X_j)^2\\
&= x^\top \Delta_{G}(\nu) x + \sum_{i\in V}\nu_{ii^\comp}(x_i-y_i)^2\\
&\geq \lambda\NRM{x-\bar X}^2 + m \NRM{x-y}^2\,.
\end{align*}
Then, for $c>0$ sufficiently small such that for any $z,z'\in\R$, $\lambda z^2+m(z-z')^2\geq cz^2+cz'^2$, we have $X^\top \Delta_{G^+}(\nu^+) X\geq c\NRM{X-\bar X}^2$ and so $\lambda_2\big(\Delta_{G^+}(\nu^+)\big)\geq c$. Let us now compute such a value $c$, to conclude this proof.

For $z,z'\in \R$,
\begin{align*}
    &\lambda z^2+m(z-z')^2- cz^2+cz'^2\\
    &= (\lambda + m -c)z^2+(m-c)z'^2-2mzz'\\
    &=\left(\sqrt{\lambda+m-c}z-\frac{m}{\sqrt{\lambda+m-c}z'}\right)\\
    &\quad+ \left(m-c+\frac{m^2}{\lambda+m-c}\right)z'\,,
\end{align*}
and this quantity is non-negative as long as $c\leq \min(\lambda,m)/4$.
\end{proof}

\nocite{*}
\bibliographystyle{plainnat} 
\bibliography{biblio}

\begin{thebibliography}{56}
\providecommand{\natexlab}[1]{#1}
\providecommand{\url}[1]{\texttt{#1}}
\expandafter\ifx\csname urlstyle\endcsname\relax
  \providecommand{\doi}[1]{doi: #1}\else
  \providecommand{\doi}{doi: \begingroup \urlstyle{rm}\Url}\fi

\bibitem[Assran et~al.(2020)Assran, Aytekin, Feyzmahdavian, Johansson, and
  Rabbat]{assran2020advances}
Mahmoud Assran, Arda Aytekin, Hamid~Reza Feyzmahdavian, Mikael Johansson, and
  Michael~G Rabbat.
\newblock Advances in asynchronous parallel and distributed optimization.
\newblock \emph{Proceedings of the IEEE}, 108\penalty0 (11):\penalty0
  2013--2031, 2020.

\bibitem[Assran and Rabbat(2021)]{Assran_2021}
Mahmoud~S. Assran and Michael~G. Rabbat.
\newblock Asynchronous gradient push.
\newblock \emph{IEEE Transactions on Automatic Control}, 66\penalty0
  (1):\penalty0 168–183, 2021.

\bibitem[Bean et~al.(1997)Bean, Kelly, and Taylor]{kelly1997braess}
N.~G. Bean, F.~P. Kelly, and P.~G. Taylor.
\newblock Braess's paradox in a loss network.
\newblock \emph{Journal of Applied Probability}, 34\penalty0 (1):\penalty0
  155--159, 1997.

\bibitem[Bertsekas(1989)]{bertsekas1989parallel}
Dimitri~P Bertsekas.
\newblock \emph{Parallel and distributed computation: numerical methods},
  volume~23.
\newblock Prentice hall Englewood Cliffs, NJ, 1989.

\bibitem[Boyd et~al.(2006)Boyd, Ghosh, Prabhakar, and Shah]{boyd2006gossip}
Stephen Boyd, Arpita Ghosh, Balaji Prabhakar, and Devavrat Shah.
\newblock Randomized gossip algorithms.
\newblock \emph{IEEE transactions on information theory}, 52\penalty0
  (6):\penalty0 2508--2530, 2006.

\bibitem[Bubeck(2015)]{bubeck2014convex}
S\'{e}bastien Bubeck.
\newblock Convex optimization: Algorithms and complexity.
\newblock \emph{Found. Trends Mach. Learn.}, 8\penalty0 (3–4):\penalty0
  231–357, November 2015.

\bibitem[Cheung et~al.(2020)Cheung, Cole, and Tao]{cheung2020fully}
Yun~Kuen Cheung, Richard Cole, and Yixin Tao.
\newblock Fully asynchronous stochastic coordinate descent: a tight lower bound
  on the parallelism achieving linear speedup.
\newblock \emph{Mathematical Programming}, pages 1--63, 2020.

\bibitem[Cooke(1963)]{cooke1963differential}
Kenneth~L Cooke.
\newblock Differential—difference equations.
\newblock In \emph{International symposium on nonlinear differential equations
  and nonlinear mechanics}, pages 155--171. Elsevier, 1963.

\bibitem[Davis(1984)]{jumpsDavis}
M.~H.~A. Davis.
\newblock Piecewise-deterministic markov processes: A general class of
  non-diffusion stochastic models.
\newblock \emph{Journal of the Royal Statistical Society. Series B
  (Methodological)}, 46\penalty0 (3):\penalty0 353--388, 1984.
\newblock ISSN 00359246.
\newblock URL \url{http://www.jstor.org/stable/2345677}.

\bibitem[{Dimakis} et~al.(2010){Dimakis}, {Kar}, {Moura}, {Rabbat}, and
  {Scaglione}]{dimakis2010synchgossip}
A.~G. {Dimakis}, S.~{Kar}, J.~M.~F. {Moura}, M.~G. {Rabbat}, and
  A.~{Scaglione}.
\newblock Gossip algorithms for distributed signal processing.
\newblock \emph{Proceedings of the IEEE}, 98\penalty0 (11):\penalty0
  1847--1864, 2010.

\bibitem[Easley and Kleinberg(2010)]{easley2010bookbraess}
David Easley and Jon Kleinberg.
\newblock \emph{Networks, Crowds, and Markets: Reasoning about a Highly
  Connected World}.
\newblock Cambridge University Press, 2010.

\bibitem[Even et~al.(2020)Even, Hendrikx, and
  Massouli{\'e}]{even2020asynchrony}
Mathieu Even, Hadrien Hendrikx, and Laurent Massouli{\'e}.
\newblock Asynchrony and acceleration in gossip algorithms.
\newblock \emph{arXiv preprint arXiv:2011.02379}, 2020.

\bibitem[Even et~al.(2021)Even, Berthier, Bach, Flammarion, Gaillard, Hendrikx,
  Massouli{\'e}, and Taylor]{even2021continuized}
Mathieu Even, Rapha\"{e}l Berthier, Francis Bach, Nicolas Flammarion, Pierre
  Gaillard, Hadrien Hendrikx, Laurent Massouli{\'e}, and Adrien Taylor.
\newblock A continuized view on nesterov acceleration for stochastic gradient
  descent and randomized gossip.
\newblock \emph{arXiv preprint arXiv:2106.07644}, 2021.

\bibitem[Gu and Liu(2009)]{lyapukrasov2009}
Keqin Gu and Yi~Liu.
\newblock Lyapunov–krasovskii functional for uniform stability of coupled
  differential-functional equations.
\newblock \emph{Automatica}, 45\penalty0 (3):\penalty0 798--804, 2009.

\bibitem[Hannah et~al.(2019)Hannah, Feng, and Yin]{hannah2018a2bcd}
Robert Hannah, Fei Feng, and Wotao Yin.
\newblock A2{BCD}: Asynchronous acceleration with optimal complexity.
\newblock In \emph{International Conference on Learning Representations}, 2019.

\bibitem[Hendrikx et~al.(2019{\natexlab{a}})Hendrikx, Bach, and
  Massouli{\'e}]{hendrikx2019accelerated}
Hadrien Hendrikx, Francis Bach, and Laurent Massouli{\'e}.
\newblock An accelerated decentralized stochastic proximal algorithm for finite
  sums.
\newblock In \emph{Advances in Neural Information Processing Systems},
  2019{\natexlab{a}}.

\bibitem[Hendrikx et~al.(2019{\natexlab{b}})Hendrikx, Bach, and
  Massoulie]{hendrikx2019coordinateaccelerated}
Hadrien Hendrikx, Francis Bach, and Laurent Massoulie.
\newblock Accelerated decentralized optimization with local updates for smooth
  and strongly convex objectives.
\newblock In \emph{International Conference on Artificial Intelligence and
  Statistics}, volume~89 of \emph{Proceedings of Machine Learning Research},
  pages 897--906. PMLR, 16--18 Apr 2019{\natexlab{b}}.

\bibitem[Hendrikx et~al.(2020)Hendrikx, Bach, and
  Massouli{\'e}]{hendrikx2020dual}
Hadrien Hendrikx, Francis Bach, and Laurent Massouli{\'e}.
\newblock Dual-free stochastic decentralized optimization with variance
  reduction.
\newblock In \emph{Advances in Neural Information Processing Systems}, 2020.

\bibitem[Horn(2012)]{horn2012matrix}
Roger Horn.
\newblock \emph{Matrix analysis}.
\newblock Cambridge University Press, Cambridge New York, 2012.

\bibitem[Kelly(1991)]{kelly1991lossnetwork}
F.~P. Kelly.
\newblock Loss networks.
\newblock \emph{The Annals of Applied Probability}, 1\penalty0 (3):\penalty0
  319--378, 1991.

\bibitem[Klenke(2014)]{Klenke2014Ppp}
Achim Klenke.
\newblock \emph{The Poisson Point Process}, pages 543--561.
\newblock Springer London, 2014.

\bibitem[Koloskova et~al.(2019)Koloskova, Stich, and
  Jaggi]{koloskova2019decentralized}
Anastasia Koloskova, Sebastian Stich, and Martin Jaggi.
\newblock Decentralized stochastic optimization and gossip algorithms with
  compressed communication.
\newblock In \emph{International Conference on Machine Learning}, volume~97,
  pages 3478--3487. PMLR, 2019.

\bibitem[Kovalev et~al.(2020)Kovalev, Salim, and
  Richt{\'a}rik]{kovalev2020optimal}
Dmitry Kovalev, Adil Salim, and Peter Richt{\'a}rik.
\newblock Optimal and practical algorithms for smooth and strongly convex
  decentralized optimization.
\newblock \emph{Advances in Neural Information Processing Systems}, 33, 2020.

\bibitem[Kovalev et~al.(2021)Kovalev, Gasanov, Gasnikov, and
  Richt{\'a}rik]{kovalev2021lower}
Dmitry Kovalev, Elnur Gasanov, Alexander Gasnikov, and Peter Richt{\'a}rik.
\newblock Lower bounds and optimal algorithms for smooth and strongly convex
  decentralized optimization over time-varying networks.
\newblock In A.~Beygelzimer, Y.~Dauphin, P.~Liang, and J.~Wortman Vaughan,
  editors, \emph{Advances in Neural Information Processing Systems}, 2021.

\bibitem[Le~Gall(2016)]{legallbook}
Jean-François Le~Gall.
\newblock \emph{Brownian Motion, Martingales, and Stochastic Calculus}, volume
  274.
\newblock Springer, 01 2016.

\bibitem[Leblond et~al.(2018)Leblond, Pedregosa, and
  Lacoste-Julien]{leblond2018asynch}
Remi Leblond, Fabian Pedregosa, and Simon Lacoste-Julien.
\newblock Improved asynchronous parallel optimization analysis for stochastic
  incremental methods.
\newblock \emph{Journal of Machine Learning Research}, 19\penalty0
  (81):\penalty0 1--68, 2018.

\bibitem[Li et~al.(2016)Li, Chen, Dong, and Wu]{Li2016DistributedMD}
Jueyou Li, Guo Chen, Zhao~Yang Dong, and Zhiyou Wu.
\newblock Distributed mirror descent method for multi-agent optimization with
  delay.
\newblock \emph{Neurocomputing}, 177:\penalty0 643--650, 2016.

\bibitem[Lian et~al.(2018)Lian, Zhang, Zhang, and Liu]{pmlr-v80-lian18a}
Xiangru Lian, Wei Zhang, Ce~Zhang, and Ji~Liu.
\newblock Asynchronous decentralized parallel stochastic gradient descent.
\newblock In Jennifer Dy and Andreas Krause, editors, \emph{Proceedings of the
  35th International Conference on Machine Learning}, volume~80 of
  \emph{Proceedings of Machine Learning Research}, pages 3043--3052. PMLR,
  10--15 Jul 2018.
\newblock URL \url{https://proceedings.mlr.press/v80/lian18a.html}.

\bibitem[Lu et~al.(2015)Lu, Feyzmahdavian, and Johansson]{lu2015dualformgossip}
Jie Lu, Hamid~Reza Feyzmahdavian, and Mikael Johansson.
\newblock Dual coordinate descent algorithms for multi-agent optimization.
\newblock In \emph{2015 European Control Conference (ECC)}, pages 715--720,
  2015.

\bibitem[Mania et~al.(2017)Mania, Pan, Papailiopoulos, Recht, Ramchandran, and
  Jordan]{mania2016perturbed}
Horia Mania, Xinghao Pan, Dimitris Papailiopoulos, Benjamin Recht, Kannan
  Ramchandran, and Michael~I. Jordan.
\newblock Perturbed iterate analysis for asynchronous stochastic optimization.
\newblock \emph{{SIAM} Journal on Optimization}, 27\penalty0 (4):\penalty0
  2202--2229, January 2017.

\bibitem[Massoulié(2002)]{massoulie2002stability}
Laurent Massoulié.
\newblock Stability of distributed congestion control with heterogeneous
  feedback delays.
\newblock \emph{Automatic Control, IEEE Transactions on}, 47:\penalty0 895 --
  902, 07 2002.

\bibitem[Mishchenko et~al.(2022)Mishchenko, Bach, Even, and
  Woodworth]{asynchsgdbeats}
Konstantin Mishchenko, Francis Bach, Mathieu Even, and Blake Woodworth.
\newblock Asynchronous sgd beats minibatch sgd under arbitrary delays, 2022.
\newblock URL \url{https://arxiv.org/abs/2206.07638}.

\bibitem[Mohar et~al.(1991)Mohar, Alavi, Chartrand, and
  Oellermann]{Mohar1991laplacian}
Mohar, Y~Alavi, G~Chartrand, and OR~Oellermann.
\newblock The laplacian spectrum of graphs.
\newblock \emph{Graph theory, combinatorics, and applications}, 1991.

\bibitem[Moradian and Kia(2018)]{ratedelay2018moradian}
Hossein Moradian and Solmaz~S Kia.
\newblock A study on rate of convergence increase due to time delay for a class
  of linear systems.
\newblock In \emph{2018 IEEE Conference on Decision and Control (CDC)}, pages
  5433--5438. IEEE, 2018.

\bibitem[{Nedic} and {Ozdaglar}(2009)]{nedic2009ieee}
A.~{Nedic} and A.~{Ozdaglar}.
\newblock Distributed subgradient methods for multi-agent optimization.
\newblock \emph{IEEE Transactions on Automatic Control}, 54\penalty0
  (1):\penalty0 48--61, 2009.

\bibitem[Nedich et~al.(2018)Nedich, Olshevsky, and Rabbat]{nedic2018network}
Angelia Nedich, Alex Olshevsky, and {Michael G.} Rabbat.
\newblock Network topology and communication-computation tradeoffs in
  decentralized optimization.
\newblock \emph{Proceedings of the IEEE}, 106\penalty0 (5):\penalty0 953--976,
  May 2018.

\bibitem[Niculescu(2001)]{niculescu2001delaystability}
Silviu-Iulian Niculescu.
\newblock \emph{Delay effects on stability: a robust control approach}, volume
  269.
\newblock Springer Science \& Business Media, 2001.

\bibitem[Phat and Niamsup(2006)]{PHAT2006343}
Vu~N. Phat and Piyapong Niamsup.
\newblock Stability of linear time-varying delay systems and applications to
  control problems.
\newblock \emph{Journal of Computational and Applied Mathematics}, 194\penalty0
  (2):\penalty0 343--356, 2006.

\bibitem[R.~Bellman(1965)]{bellman1963delayedode}
K.~L.~Cooke R.~Bellman.
\newblock Differential-difference equations.
\newblock \emph{Journal of Applied Mathematics and Mechanics}, 45\penalty0
  (6):\penalty0 448--448, 1965.

\bibitem[Recht et~al.(2011)Recht, Re, Wright, and Niu]{niu2011hogwild}
Benjamin Recht, Christopher Re, Stephen Wright, and Feng Niu.
\newblock Hogwild!: A lock-free approach to parallelizing stochastic gradient
  descent.
\newblock In \emph{Advances in Neural Information Processing Systems},
  volume~24. Curran Associates, Inc., 2011.

\bibitem[Rump(1997)]{RUMP19971}
S.M. Rump.
\newblock Theorems of perron-frobenius type for matrices without sign
  restrictions.
\newblock \emph{Linear Algebra and its Applications}, 266:\penalty0 1--42,
  1997.

\bibitem[Scaman et~al.(2017)Scaman, Bach, Bubeck, Lee, and
  Massouli{\'e}]{scaman2017optimal}
Kevin Scaman, Francis Bach, S{\'e}bastien Bubeck, Yin~Tat Lee, and Laurent
  Massouli{\'e}.
\newblock Optimal algorithms for smooth and strongly convex distributed
  optimization in networks.
\newblock In \emph{International Conference on Machine Learning}, volume~70,
  pages 3027--3036. PMLR, 2017.

\bibitem[Scaman et~al.(2019)Scaman, Bach, Bubeck, Lee, and
  Massouli{\'e}]{scaman2019optimal}
Kevin Scaman, Francis Bach, S{\'e}bastien Bubeck, Yin Lee, and Laurent
  Massouli{\'e}.
\newblock Optimal convergence rates for convex distributed optimization in
  networks.
\newblock \emph{Journal of Machine Learning Research}, 20:\penalty0 1--31,
  2019.

\bibitem[Shi et~al.(2015)Shi, Ling, Wu, and Yin]{shi2014extra}
Wei Shi, Qing Ling, Gang Wu, and Wotao Yin.
\newblock {EXTRA}: An exact first-order algorithm for decentralized consensus
  optimization.
\newblock \emph{{SIAM} Journal on Optimization}, 25\penalty0 (2):\penalty0
  944--966, 2015.

\bibitem[Sirb and Ye(2018)]{sirb2018delayedgossip}
Benjamin Sirb and Xiaojing Ye.
\newblock Decentralized consensus algorithm with delayed and stochastic
  gradients.
\newblock \emph{SIAM Journal on Optimization}, 28\penalty0 (2):\penalty0
  1232--1254, 2018.

\bibitem[Stich and Karimireddy(2019)]{stich_asynch_sgd}
Sebastian~U. Stich and Sai~Praneeth Karimireddy.
\newblock The error-feedback framework: Better rates for sgd with delayed
  gradients and compressed communication.
\newblock \emph{arXiv}, 2019.

\bibitem[Sun et~al.(2017)Sun, Hannah, and Yin]{sun2017asynchronous}
Tao Sun, Robert Hannah, and Wotao Yin.
\newblock Asynchronous coordinate descent under more realistic assumption.
\newblock In \emph{Proceedings of the 31st International Conference on Neural
  Information Processing Systems}, NIPS'17, page 6183–6191, 2017.

\bibitem[Tian et~al.(2020)Tian, Sun, and Scutari]{tian2019}
Ye~Tian, Ying Sun, and Gesualdo Scutari.
\newblock Achieving linear convergence in distributed asynchronous multiagent
  optimization.
\newblock \emph{IEEE Transactions on Automatic Control}, 65\penalty0
  (12):\penalty0 5264--5279, 2020.
\newblock \doi{10.1109/TAC.2020.2977940}.

\bibitem[Uribe et~al.(2020)Uribe, Lee, Gasnikov, and Nedi{\'c}]{uribe2020dual}
C{\'e}sar~A Uribe, Soomin Lee, Alexander Gasnikov, and Angelia Nedi{\'c}.
\newblock A dual approach for optimal algorithms in distributed optimization
  over networks.
\newblock In \emph{2020 Information Theory and Applications Workshop (ITA)},
  pages 1--37. IEEE, 2020.

\bibitem[Wang et~al.(2015)Wang, Liao, Huang, and Li]{wang2015delays}
Huiwei Wang, Xiaofeng Liao, Tingwen Huang, and Chaojie Li.
\newblock Cooperative distributed optimization in multiagent networks with
  delays.
\newblock \emph{IEEE Transactions on Systems, Man, and Cybernetics: Systems},
  45\penalty0 (2):\penalty0 363--369, 2015.
\newblock \doi{10.1109/TSMC.2014.2332306}.

\bibitem[Wang et~al.(2019)Wang, Sahu, Yang, Joshi, and Kar]{matcha2019}
Jianyu Wang, Anit~Kumar Sahu, Zhouyi Yang, Gauri Joshi, and Soummya Kar.
\newblock Matcha: Speeding up decentralized sgd via matching decomposition
  sampling.
\newblock In \emph{2019 Sixth Indian Control Conference (ICC)}, pages 299--300,
  2019.
\newblock \doi{10.1109/ICC47138.2019.9123209}.

\bibitem[Wu et~al.(2018)Wu, Yuan, Ling, Yin, and Sayed]{wu2018delays}
Tianyu Wu, Kun Yuan, Qing Ling, Wotao Yin, and Ali~H. Sayed.
\newblock Decentralized consensus optimization with asynchrony and delays.
\newblock \emph{IEEE Transactions on Signal and Information Processing over
  Networks}, 4\penalty0 (2):\penalty0 293--307, 2018.
\newblock \doi{10.1109/TSIPN.2017.2695121}.

\bibitem[Xi et~al.(2018)Xi, Mai, Xin, Abed, and Khan]{xi2016directedgraph}
Chenguang Xi, Van~Sy Mai, Ran Xin, Eyad~H. Abed, and Usman~A. Khan.
\newblock Linear convergence in optimization over directed graphs with
  row-stochastic matrices.
\newblock \emph{IEEE Transactions on Automatic Control}, 63\penalty0
  (10):\penalty0 3558--3565, 2018.

\bibitem[Xu et~al.(2020)Xu, Tian, Sun, and Scutari]{xu2020accelerated}
Jinming Xu, Ye~Tian, Ying Sun, and Gesualdo Scutari.
\newblock Accelerated primal-dual algorithms for distributed smooth convex
  optimization over networks.
\newblock In \emph{International Conference on Artificial Intelligence and
  Statistics}, pages 2381--2391. PMLR, 2020.

\bibitem[Ying et~al.(2021)Ying, Yuan, Chen, Hu, Pan, and
  Yin]{ying2021exponential}
Bicheng Ying, Kun Yuan, Yiming Chen, Hanbin Hu, Pan Pan, and Wotao Yin.
\newblock Exponential graph is provably efficient for decentralized deep
  training.
\newblock In A.~Beygelzimer, Y.~Dauphin, P.~Liang, and J.~Wortman Vaughan,
  editors, \emph{Advances in Neural Information Processing Systems}, 2021.

\bibitem[Zhou et~al.(2018)Zhou, Mertikopoulos, Bambos, Glynn, Ye, Li, and
  Fei-Fei]{zhou2018asynch}
Zhengyuan Zhou, Panayotis Mertikopoulos, Nicholas Bambos, Peter Glynn, Yinyu
  Ye, Li-Jia Li, and Li~Fei-Fei.
\newblock Distributed asynchronous optimization with unbounded delays: How slow
  can you go?
\newblock In Jennifer Dy and Andreas Krause, editors, \emph{International
  Conference on Machine Learning}, volume~80, pages 5970--5979. PMLR, 2018.

\end{thebibliography}

\end{document}